\documentclass[11pt]{article}

\usepackage[margin=1in]{geometry}
\usepackage{amsmath,amssymb,amsthm,mathtools}
\usepackage{xcolor}
\usepackage{enumitem}
\usepackage[colorlinks=true,citecolor=blue,linkcolor=blue,urlcolor=blue]{hyperref}
\usepackage{tikz}
\usetikzlibrary{arrows.meta}
\theoremstyle{plain}
\newtheorem{lemma}{Lemma}[section]
\newtheorem{theorem}[lemma]{Theorem}
\newtheorem{proposition}[lemma]{Proposition}

\theoremstyle{definition}

\theoremstyle{remark}
\newtheorem{remark}[lemma]{Remark}

\newcommand{\Z}{\mathbb Z}
\newcommand{\Pp}{\mathbb P}
\newcommand{\Ee}{\mathbb E}

\title{Aperiodic random walks: uniform estimates, local limits and invariance principles}
\author{Jiaming Xu}
\date{}

\begin{document}
\maketitle

\begin{abstract}
We prove uniform endpoint estimates, killed local limits, and
conditioned-bridge invariance principles for triangular arrays of centered,
aperiodic, integer-valued random walks with uniform exponential tails.
The endpoint estimates cover all nonnegative heights, while the local
limits treat positive diffusive and zero endpoints. The increment law may vary with the scaling parameter.
\end{abstract}

\section{Introduction}

We consider the following triangular array of integer-valued random
walks. For \(N=1,2,\ldots\), let \(X_{N,1},X_{N,2},\ldots\) be i.i.d. with
law \(\nu_N\) on \(\Z\), and write
\[
 S_N(k)=\sum_{j=1}^k X_{N,j},\qquad
 \sigma_N^2=\Ee X_{N,1}^2 .
\]
We impose the following hypotheses.
\begin{enumerate}[label=\textup{(A\arabic*)},ref=\textup{(A\arabic*)}]
\item \label{ass:centered}
      \(\Ee X_{N,1}=0\) for every \(N\), and
      \(\sigma_N^2\to\sigma^2\in(0,\infty)\).
\item \label{ass:exponential}
      There is \(\eta_0>0\) such that
      \[
        \sup_N \Ee e^{\eta_0|X_{N,1}|}<\infty .
      \]
\item \label{ass:weak}
      \(\nu_N\Rightarrow\nu\), where \(\nu\) is an integer-valued
      probability law.
\item \label{ass:aperiodic}
      The array is uniformly strongly aperiodic: for every
      \(\delta\in(0,\pi)\), there is \(\rho_\delta<1\) such that,
      for all sufficiently large \(N\),
      \[
        \sup_{\delta\le |t|\le\pi}
        \left|\sum_{z\in\Z}e^{itz}\nu_N(z)\right|
        \le \rho_\delta .
      \]
\end{enumerate}
Assumptions~\ref{ass:exponential} and~\ref{ass:weak} imply that, for every
\(0<\eta<\eta_0\),
\begin{equation}
  \sum_{z\in\Z} e^{\eta|z|}
  |\nu_N(z)-\nu(z)|\longrightarrow0 .
  \label{eq:weighted-tv}
\end{equation}
Indeed, weak convergence gives convergence on every finite subset of
\(\Z\), while the two tails are uniformly negligible by
Assumption~\ref{ass:exponential}. Consequently, all moments converge.
Together with Assumptions~\ref{ass:centered} and~\ref{ass:aperiodic}, this
also shows that the limit law is centered, has variance \(\sigma^2\), and is
aperiodic.

We focus on walks started at a nonnegative height and conditioned to
remain nonnegative. For a fixed increment law, these
objects are well studied: ballot-type estimates control the one-sided
survival probabilities, local limit theorems identify
the killed transition kernel, and invariance principles describe the
conditioned bridge itself; see, for instance,
Addario--Berry--Reed~\cite{ABR} and
Caravenna--Chaumont~\cite{CCbridge}.

When the increment law varies with \(N\), the same conclusions are
expected as soon as the convergence of the laws is sufficiently well
controlled. However, the triangular-array version needed in applications
does not seem to be available in a directly quotable form. The purpose
of this note is to fill this gap under the assumptions above: uniform
exponential tails, convergence of the increment laws, and uniform
aperiodicity. We record uniform endpoint and maximum estimates, killed local
limits, and conditioned-bridge invariance principles. We also use the dual
walk, namely the walk with reversed increments
\(\check\nu_N(z)=\nu_N(-z)\). This is the natural description of bridges
ending at the boundary. We treat it throughout together with the
original walk. Under the assumptions of this note, the two walks satisfy the
same uniform estimates, although the constants in their survival
asymptotics may differ, as recorded in
\eqref{eq:survival-constant-convergence}.

The main motivation comes from the author's work on soft-edge limits of
certain discrete and continuous random matrix ensembles. In
Keating--Xu~\cite{KX}, the forthcoming work~\cite{XuForthcoming}, and
possible extensions of these works, the relevant moment expansions are
governed by conditioned walks whose increment laws have larger support than a
Bernoulli law and may vary with matrix size. This application motivates
the present formulation, but the estimates below are purely random-walk
statements and may be useful elsewhere.

We keep the scope deliberately narrow. Fixed-law results are known in more
general lattice settings, and Caravenna--Chaumont~\cite{CCbridge} treat
walks in stable domains of attraction. The conclusions here should admit
analogous extensions under assumptions similar to
Assumptions~\ref{ass:centered}--\ref{ass:aperiodic}, but we do not pursue
them. By contrast, if
uniform aperiodicity is removed, even the ordinary lattice local limit theorem
can oscillate with \(N\); if the uniform exponential-moment assumption
is weakened to convergence of variances alone, the full-range maximum
estimates below need not hold.
\subsection*{AI statement.} Most of the proofs in this note were
initially generated by ChatGPT 5.6, although the underlying mathematical
ingredients should be standard to experts on random walks. The human author
subsequently checked the arguments line by line, made various revisions, and
takes full responsibility for the correctness of the note.

\section{Uniform estimates}

We begin with an estimate for the unconditioned walks. The exponent
\(-cz^2/(L+|z|+1)\) gives the usual Gaussian tail when \(|z|\le L\) and an
exponential tail when \(|z|\ge L\). The latter is essential because
\(|S_N(L)|\) is unbounded for every \(L\ge1\).

\begin{lemma}[Uniform local and local-maximum bounds]
\label{lem:unconditioned}
There are constants \(C,c>0\), independent of \(N,L,z,R\), such that
for all sufficiently large \(N\), \(L,R\in\Z_{\ge0}\), and \(z\in\Z\),
\begin{align}
 \Pp(S_N(L)=z)
 &\le \frac{C}{\sqrt{L+1}}
 \exp\left\{-c\frac{z^2}{L+|z|+1}\right\},       \label{eq:local-bernstein}\\
 \Pp\left(S_N(L)=z,
       \max_{0\le j\le L}|S_N(j)|\ge R\right)
 &\le \frac{C}{\sqrt{L+1}}
 \exp\left\{-c\frac{R^2}{L+R+1}\right\}.         \label{eq:local-max}
\end{align}
Moreover, for some sufficiently large \(N_0\), the uniform lattice
local central limit theorem holds:
\begin{equation}
 \lim_{L\to\infty}\sup_{N\ge N_0}\sup_{z\in\Z}
 \left|
 \sigma_N\sqrt L\,\Pp(S_N(L)=z)
 -\frac1{\sqrt{2\pi}}
  \exp\left\{-\frac{z^2}{2\sigma_N^2L}\right\}
 \right|=0.
 \label{eq:uniform-local-clt}
\end{equation}
\end{lemma}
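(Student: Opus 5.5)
The plan is to base everything on Fourier analysis of the characteristic function \(\phi_N(t)=\sum_z e^{itz}\nu_N(z)\), together with an exponential tilt (Cramér transform) to obtain the exponential factors. The first step is a set of uniform expansions near \(t=0\). By Assumption~\ref{ass:exponential}, \(\phi_N\) extends analytically to the strip \(|\operatorname{Im} t|<\eta_0\), with uniformly bounded derivatives on \(|\operatorname{Im}t|\le\eta_0/2\). Using \(\Ee X_{N,1}=0\) and \(\sigma_N^2\to\sigma^2\), we get, for some \(\delta_0>0\) and all large \(N\),
\[
|\phi_N(t)|\le e^{-\sigma^2t^2/4}\quad(|t|\le\delta_0),\qquad
\left|\phi_N(t)-e^{-\sigma_N^2t^2/2}\right|\le C|t|^3 .
\]
Assumption~\ref{ass:aperiodic} with \(\delta=\delta_0\) gives \(|\phi_N|\le\rho_{\delta_0}<1\) on \(\delta_0\le|t|\le\pi\). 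Inverting,
\[
\Pp(S_N(L)=z)=\frac1{2\pi}\int_{-\pi}^{\pi}e^{-itz}\phi_N(t)^L\,dt .
\]
The standard split into \(|t|\le L^{-1/2}\log L\), the remaining part of \(|t|\le\delta_0\), and \(\delta_0\le|t|\le\pi\) then yields \eqref{eq:uniform-local-clt}. All error terms depend only on the uniform constants above, so the convergence is uniform in \(N\ge N_0\). The same split gives the crude bound \(\Pp(S_N(L)=z)\le C(L+1)^{-1/2}\).

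For \eqref{eq:local-bernstein}, I would tilt the law. For \(|\lambda|\le\eta_0/2\), set \(\nu_N^\lambda(x)=e^{\lambda x}\nu_N(x)/M_N(\lambda)\), where \(M_N(\lambda)=\Ee e^{\lambda X_{N,1}}\). Then
\[
\Pp(S_N(L)=z)=M_N(\lambda)^L e^{-\lambda z}\,\Pp(S^\lambda_N(L)=z).
\]
Since \(M_N(\lambda)\le e^{C\lambda^2}\) uniformly, choose \(\lambda=c_1 z/(L+|z|+1)\) with \(c_1\) small, so that \(|\lambda|\le\eta_0/2\). This gives
\[
L\log M_N(\lambda)-\lambda z\le C\lambda^2L-\lambda z\le -c\,z^2/(L+|z|+1).
\]
It remains to bound \(\Pp(S^\lambda_N(L)=z)\le C(L+1)^{-1/2}\) uniformly over these small \(\lambda\). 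The tilted laws still have uniform exponential moments and variances bounded below. Their characteristic functions are \(\phi_N(t-i\lambda)/M_N(\lambda)\), which are uniformly close to \(\phi_N(t)\) for \(|\lambda|\) small, by the analytic bounds. So the strong aperiodicity on \(\delta_0\le|t|\le\pi\) survives with a slightly larger \(\rho<1\), after possibly shrinking the tilt range. The Fourier argument above then applies verbatim. The tilted walk is not centered, but only the modulus \(|\phi^\lambda|\le e^{-ct^2}\) near \(0\) is needed for the crude bound, and that follows from the variance lower bound.

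For \eqref{eq:local-max}, split according to which side is hit. Let \(\tau\) be the first time \(j\) with \(|S_N(j)|\ge R\); by symmetry of the argument, consider \(S_N(\tau)\ge R\). I would first do the case \(j=\tau\le L/2\) and use the strong Markov property: the remaining \(L-\tau\ge L/2\) steps give a local factor \(\le C(L+1)^{-1/2}\) by the crude bound. The probability that \(\max_{j\le L}S_N(j)\ge R\) is at most \(e^{-cR^2/(L+R+1)}\), by a tilted exponential martingale and Doob's inequality with the same choice of \(\lambda\). For \(\tau>L/2\), I would reverse time. The reversed walk \(z-S_N(L-j)\) has dual increments \(\check\nu_N\), which satisfy the same assumptions. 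The event then requires the reversed path to travel a distance at least \(\min(R-z, R+z)\) or the like. It is cleaner to handle this case by bounding with \(\Pp(S_N(L)=z,\ \max_{j\le L/2}\ldots)\) for the reversed walk. If the level is reached in the second half, then either the first half or the second half contains a displacement of order \(R/2\) relative to an endpoint, and each half can be treated as in the first case. The overshoot \(S_N(\tau)-R\) is controlled by the exponential tails and is absorbed into constants.

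The main obstacle I expect is the uniformity of the aperiodicity under tilting, together with getting the exponential regime \(|z|\gg L\) right. In that regime \(\lambda\) stays of order one, not small. So I would restrict to \(|\lambda|\le\lambda_0\) for a fixed small \(\lambda_0\) and accept the resulting constant \(c\), which still gives the \(e^{-c|z|}\) behaviour. When \(L=0\) or \(L\) is small the bounds are trivial, since \(\Pp(S_N(L)=z)\le \Pp(|S_N(L)|\ge |z|)\le C^Le^{-\eta_0|z|/2}\).
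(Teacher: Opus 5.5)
Your proposal is correct and follows essentially the same route as the paper: Fourier inversion with a Gaussian bound near the origin and the uniform aperiodicity gap away from it (giving both the crude $C(L+1)^{-1/2}$ bound and the uniform local CLT), an exponential tilt with $\psi_N(\lambda)\le C\lambda^2$ plus a uniform crude bound for the tilted walks to get \eqref{eq:local-bernstein}, and the exponential martingale combined with a half-splitting argument (maximal bound on one half, local concentration on the other) for \eqref{eq:local-max}. The differences are minor. Your tilt $\lambda=c_1z/(L+|z|+1)$ replaces the paper's $\operatorname{sgn}(z)\alpha\min\{|z|/L,1\}$, and you justify the tilted aperiodicity via $\phi_N(t-i\lambda)/M_N(\lambda)$ where the paper only asserts it; your final dichotomy on the midpoint height ($|S_N(m)|\ge R/2$, or the second half moves more than $R/2$ away from $S_N(m)$) also removes the need for the paper's separate case $|z|\ge R/2$ handled by \eqref{eq:local-bernstein}, a case split your abandoned ``$\min(R-z,R+z)$'' route would have required.
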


\begin{proof}
Write
\[
 \phi_N(u):=\Ee e^{iuX_{N,1}},
 \qquad
 \psi_N(t):=\log\Ee e^{tX_{N,1}} .
\]
We first record the uniform local concentration estimate. By
Assumptions~\ref{ass:centered} and~\ref{ass:exponential},
\(\sigma_N^2\) is bounded away from zero and
\(\Ee |X_{N,1}|^3\) is bounded uniformly in \(N\). Hence, for some
\(\delta,c>0\) and all sufficiently large \(N\),
\begin{equation}\label{eq_1}
 |\phi_N(u)|\le e^{-cu^2},\qquad |u|\le\delta .
\end{equation}
On \(\delta\le |u|\le\pi\), Assumption~\ref{ass:aperiodic} gives
\begin{equation}\label{eq_2}|\phi_N(u)|\le\rho<1.\end{equation}
Fourier inversion therefore gives
\[
 \sup_{z\in\Z}\Pp(S_N(L)=z)
 \le
 \frac1{2\pi}\int_{-\pi}^{\pi}|\phi_N(u)|^L\,du
 \le \frac{C}{\sqrt{L+1}} .
\]
The same Fourier decomposition proves
\eqref{eq:uniform-local-clt}.  Indeed, split the Fourier integral into \(|u|\le\delta\) and
\(\delta\le |u|\le\pi\).  In the first region, the change of variables
\(v=\sigma_N\sqrt L\,u\), the uniform Taylor expansion of \(\phi_N\),
and \eqref{eq_1} give \(L^1(dv)\)-convergence to \(e^{-v^2/2}\).
The second region contributes at most \(C\sqrt L\,\rho^L\) by
\eqref{eq_2}.  Fourier inversion then proves
\eqref{eq:uniform-local-clt}, uniformly in \(z\).

We now add the exponential factor by a tilting argument. For
\(|t|\le t_0\), with \(t_0>0\) small, set
\[
 \nu_{N,t}(x):=e^{tx-\psi_N(t)}\nu_N(x),
 \qquad
 \phi_{N,t}(u):=\sum_{x\in\Z}e^{iux}\nu_{N,t}(x).
\]
The tilted laws are not centered. But it suffices to note that under our assumptions, for $|t|\le t_0$, their exponential moments are bounded in a
common neighborhood of the origin, their variances are bounded above
and away from zero, and their characteristic functions have a uniform
gap away from the origin. In particular,
for sufficiently small \(t_0\), there are constants \(c>0\) and
\(\rho<1\) for which \eqref{eq_1}--\eqref{eq_2} hold uniformly in
\(N\) and \(t\). Thus the preceding Fourier bound
applied to \(\nu_{N,t}\) gives
\[
 \sup_{w\in\Z}\Pp_{N,t}(S_N(L)=w)\le \frac{C}{\sqrt{L+1}},
\]
uniformly in \(N,L,t\). Since the original laws are centered and have a
uniform exponential moment,
\[
 \psi_N(t)\le Ct^2,\qquad |t|\le t_0 .
\]
Moreover
\[
 \Pp(S_N(L)=z)
 =
 e^{-tz+L\psi_N(t)}\Pp_{N,t}(S_N(L)=z).
\]
Choose
\[
 t=\operatorname{sgn}(z)\,\alpha\min\{|z|/L,1\}
\]
when \(L\ge1\), with \(\alpha>0\) small enough. If \(|z|\le L\), then
\(-tz+L\psi_N(t)\le -c z^2/L\); if \(|z|>L\), then
\(-tz+L\psi_N(t)\le -c|z|\). Enlarging \(C\) to cover \(L=0\) gives
\eqref{eq:local-bernstein}.

For the maximum estimate without the endpoint, the process
\(\exp\{tS_N(j)-j\psi_N(t)\}\) is a martingale. Stopping it at the first
entrance into \([R,\infty)\) gives
\[
 \Pp\left(\max_{0\le j\le L}S_N(j)\ge R\right)
 \le \exp\{-tR+L\psi_N(t)\}.
\]
With \(t=\alpha\min\{R/L,1\}\), and then applying the same argument to
\(-S_N\), we obtain
\[
 \Pp\left(\max_{0\le j\le L}|S_N(j)|\ge R\right)
 \le C\exp\left\{-c\frac{R^2}{L+R+1}\right\}.
\]

It remains only to keep the endpoint. Let \(m=\lfloor L/2\rfloor\). If
\(|z|\ge R/2\), the bound follows from \eqref{eq:local-bernstein}. If
\(|z|<R/2\), an exit from \([-R,R]\) occurs either during the first half,
or, after reversing the second half from its endpoint \(z\), the reversed
walk has absolute displacement at least \(R/2\). In either case we apply
the preceding maximal bound to the half that contains this displacement
and the uniform \(C(L+1)^{-1/2}\) local concentration bound to the other
half. This proves \eqref{eq:local-max}.
\end{proof}

\paragraph{Endpoint constants and boundary survival.}

For a fixed \(N\), let
\begin{align*}
 u_{N,n}^{\rightarrow}
 &:=\Pp\bigl(S_N(j)\ge0,\ 1\le j\le n\bigr),\\
 u_{N,n}^{\leftarrow}
 &:=\Pp\bigl(-S_N(j)\ge0,\ 1\le j\le n\bigr).
\end{align*}
These are the survival probabilities for the walk and its dual.

\begin{lemma}[Uniform boundary survival and its constant]
\label{lem:survivalprob}
For some \(c,C>0\), uniformly for all sufficiently large \(N\) and all
\(n\ge0\),
\begin{equation}
 \frac{c}{\sqrt{n+1}}
 \le u_{N,n}^{\rightarrow},u_{N,n}^{\leftarrow}
 \le \frac{C}{\sqrt{n+1}}.
 \label{eq:boundary-survival}
\end{equation}
Moreover the limits
{
\begin{align*}
 \kappa_N^{\rightarrow}
 &:=\lim_{n\to\infty}\sqrt n\,u_{N,n}^{\rightarrow},\\
 \kappa_N^{\leftarrow}
 &:=\lim_{n\to\infty}\sqrt n\,u_{N,n}^{\leftarrow}.
\end{align*}}
exist, are positive, and the convergence is uniform along the array:
\begin{equation}
 \sup_{N\ge N_0}
 \left|\sqrt n\,u_{N,n}^{\rightarrow}-\kappa_N^{\rightarrow}\right|
 +\sup_{N\ge N_0}
 \left|\sqrt n\,u_{N,n}^{\leftarrow}-\kappa_N^{\leftarrow}\right|
 \longrightarrow0.
 \label{eq:uniform-survival-asymptotic}
\end{equation}
Set
\begin{equation*}
 a_N:=\sqrt{\frac\pi2}\frac{\kappa_N^{\rightarrow}}{\sigma_N},
 \qquad
 b_N:=\sqrt{\frac\pi2}\frac{\kappa_N^{\leftarrow}}{\sigma_N}.
\end{equation*}
Then there are \(a,b\in(0,\infty)\) such that
\begin{equation}
 a_N\longrightarrow a,
 \qquad b_N\longrightarrow b.
 \label{eq:survival-constant-convergence}
\end{equation}
\end{lemma}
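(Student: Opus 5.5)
The natural tool is the Sparre Andersen / Baxter–Spitzer factorization. For a walk with law \(\nu_N\), the generating function of the weak-ladder survival probabilities has the exact representation
\[
 \sum_{n\ge0}u_{N,n}^{\rightarrow}s^n
 =\exp\Bigl\{\sum_{k\ge1}\frac{s^k}{k}\Pp\bigl(S_N(k)\ge0\bigr)\Bigr\},
 \qquad |s|<1,
\]
and analogously for \(u_{N,n}^{\leftarrow}\) with \(\Pp(S_N(k)\le0)\). Writing \(\Pp(S_N(k)\ge0)=\tfrac12+\beta_{N,k}\), this becomes
\[
 \sum_n u_{N,n}^{\rightarrow}s^n=(1-s)^{-1/2}\exp\Bigl\{\sum_k \beta_{N,k}s^k/k\Bigr\}.
\]
The plan is therefore to show that \(\sum_k|\beta_{N,k}|/k\) converges uniformly in \(N\ge N_0\), with uniformly small tails. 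Then \(\kappa_N^{\rightarrow}=\pi^{-1/2}\exp\{\sum_k\beta_{N,k}/k\}\) follows by a Tauberian argument. The main step is a uniform Berry–Esseen-type bound \(|\beta_{N,k}|\le Ck^{-1/2}\). In fact \(\beta_{N,k}=\tfrac12\Pp(S_N(k)=0)+\tfrac12(\Pp(S_N(k)>0)-\Pp(S_N(k)<0))\). The first term is \(O(k^{-1/2})\) by \eqref{eq:local-bernstein}. The second is \(O(k^{-1/2})\) uniformly by the classical Berry–Esseen theorem, since third moments are uniformly bounded and \(\sigma_N\) is bounded below. Hence \(\sum_k|\beta_{N,k}|/k\le C\) uniformly, and \(\sum_{k>K}|\beta_{N,k}|/k\le CK^{-1/2}\).

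For the two-sided bound \eqref{eq:boundary-survival} and the uniform asymptotic \eqref{eq:uniform-survival-asymptotic}, I would not use a soft Tauberian theorem, since uniformity is needed. Instead I would expand coefficients directly. Set \(e^{B_N(s)}=\sum_m g_{N,m}s^m\) with \(B_N(s)=\sum_k\beta_{N,k}s^k/k\). Then
\[
 u_{N,n}^{\rightarrow}=\sum_{m\le n}g_{N,m}\binom{2(n-m)}{n-m}4^{-(n-m)}.
\]
I need \(\sum_m|g_{N,m}|\) bounded uniformly, with uniformly small tails and \(\sum_m m^{1/2}|g_{N,m}|\)-type control. This does not follow from \(|\beta_{N,k}|\le Ck^{-1/2}\) alone. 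My first attempt would be a sharper expansion, \(\beta_{N,k}=\gamma_N k^{-1/2}+O(k^{-1})\), obtained via the Edgeworth/lattice correction. But then \(B_N\) is not absolutely summable with a good coefficient decay, so this is the main obstacle.

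A cleaner fallback avoids coefficient bounds. The identity gives the convolution relation \(nu_n=\sum_{k=1}^n \Pp(S_N(k)\ge0)u_{n-k}\) (Spitzer's recursion). Combining this with the monotonicity of \(u_n\) in \(n\), one gets the classical Rogozin-type argument: monotone coefficients plus uniform convergence of the generating function \((1-s)^{1/2}\sum u_ns^n\to \sqrt\pi\,\kappa_N\) as \(s\uparrow1\), with the error bounded by \(C\sum_k|\beta_{N,k}|(1-s^k)/k\), which is uniformly small. Karamata's Tauberian theorem with monotonicity is quantitative enough. Its standard proof (approximating an indicator by polynomials in \(s\)) only uses uniform convergence of the Laplace-type transforms, so it yields uniform convergence of \(\sqrt n\,u_{N,n}\).

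The two-sided bounds then follow for large \(n\) from the uniform asymptotic, since \(\kappa_N\in[\pi^{-1/2}e^{-C},\pi^{-1/2}e^{C}]\). For small \(n\) they follow from \(u_{N,n}\ge u_{N,n_1}\) and \(u_{N,n}\le1\). The dual walk is handled identically, since \(\check\nu_N\) satisfies all of (A1)–(A4).

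Finally, \eqref{eq:survival-constant-convergence} follows from \(\beta_{N,k}\to\beta_k\) for each fixed \(k\), which holds because \eqref{eq:weighted-tv} gives convergence of \(\Pp(S_N(k)\ge0)\) for fixed \(k\). Together with the uniform tail bound \(CK^{-1/2}\), this gives \(\sum_k\beta_{N,k}/k\to\sum_k\beta_k/k\), hence \(\kappa_N\to\kappa>0\). Since \(\sigma_N\to\sigma\), we get \(a_N\to a\), and similarly \(b_N\to b\).
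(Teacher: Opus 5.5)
Your proposal is correct and follows essentially the same route as the paper: the Sparre Andersen--Spitzer identity, the uniform bound $|\Pp(S_N(k)\ge0)-\tfrac12|\le Ck^{-1/2}$ giving $(1-s)^{1/2}U_N(s)\to e^{R_N(1)}$ uniformly in $N$ with error $O(\sqrt{1-s})$, a Tauberian step made uniform in $N$, de-averaging by monotonicity of $u_{N,n}$, and dominated convergence for the constants. The only difference is in how uniformity is obtained in the Tauberian step, and it is minor: you run Karamata's polynomial-approximation proof with $N$-independent polynomials, whereas the paper argues by contradiction along a sequence $(N_k,x_k)$ using the continuity theorem for Laplace transforms (and your coefficient-expansion detour is not needed).
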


\begin{proof}
The Sparre
Andersen--Spitzer formula~\cite[Equation~(2.2)]{AurzadaSimon} gives, for
\(0\le s<1\),
\begin{equation*}
 U_N(s):=\sum_{n\ge0}s^n u_{N,n}^{\rightarrow}
 =
 \exp\left\{
   \sum_{n\ge1}\frac{s^n}{n}\Pp(S_N(n)\ge0)
 \right\}.
\end{equation*}
This identity is valid for arbitrary real-valued i.i.d. increments.  Uniform Berry--Esseen, together
with the local bound at zero, yields
\begin{equation*}
 \left|\Pp(S_N(n)\ge0)-\frac12\right|
 \le Cn^{-1/2}.
\end{equation*}
Consequently, with
\[
 R_N(s):=\sum_{n\ge1}\frac{s^n}{n}
 \left(\Pp(S_N(n)\ge0)-\frac12\right),
\]
we have
\begin{equation*}
 U_N(s)=(1-s)^{-1/2}e^{R_N(s)},
 \qquad
 |R_N(1)-R_N(s)|
 \le C\sum_{n\ge1}\frac{1-s^n}{n^{3/2}}
 \le C\sqrt{1-s}.
\end{equation*}
In particular, \(R_N(1)\) is bounded uniformly in \(N\), and
\begin{equation}
 U_N(s)=e^{R_N(1)}(1-s)^{-1/2}
 \bigl(1+O(\sqrt{1-s})\bigr)
 \label{eq:uniform-generating-asymptotic}
\end{equation}
uniformly in $N$ along the array.

We next give a Hardy--Littlewood Tauberian type argument uniform in \(N\).
Let
\(A_N(x)=\sum_{0\le j\le x}u_{N,j}^{\rightarrow}\).  From
\eqref{eq:uniform-generating-asymptotic}, applied at
\(s=e^{-\lambda/n}\), the Laplace transforms of the positive measures
\[
 e^{-R_N(1)}n^{-1/2}
 \sum_{j\ge0}u_{N,j}^{\rightarrow}\,\delta_{j/n}
\]
converge as $n\to\infty$, uniformly in $N$ and for \(\lambda\) in compact subsets of
\((0,\infty)\), to \(\lambda^{-1/2}\).  Consequently,
\begin{equation}
 A_N(x)=\frac{2e^{R_N(1)}}{\sqrt\pi}\sqrt{x}\,(1+o(1)),
 \qquad x\to\infty,
 \label{eq:uniform-partial-survival}
\end{equation}
where the \(o(1)\) is uniform in \(N\). Indeed, if the uniformity failed, pick a sequence
$N_k$ and $x_k\to\infty$ along which the asserted asymptotic does not
hold.  Define the positive measures
\[
  \mu_k
  :=
  e^{-R_{N_k}(1)}x_k^{-1/2}
  \sum_{j\ge0}u_{N_k,j}^{\rightarrow}\,\delta_{j/x_k}.
\]
By \eqref{eq:uniform-generating-asymptotic}, for every $\lambda>0$,
\[
  \widehat\mu_k(\lambda)
  :=
  \int_0^\infty e^{-\lambda y}\,\mu_k(dy)
  =
  e^{-R_{N_k}(1)}x_k^{-1/2}
  U_{N_k}(e^{-\lambda/x_k})
  \longrightarrow \lambda^{-1/2}.
\]
The continuity theorem for Laplace transforms of locally finite
positive measures therefore gives the vague convergence of \(\mu_k\) to
\(dy/\sqrt{\pi y}\), which implies
\[
  e^{-R_{N_k}(1)}x_k^{-1/2}A_{N_k}(x_k)
  =\mu_k([0,1])
  \longrightarrow \frac{2}{\sqrt\pi},
\]
contradicting the choice of $(N_k,x_k)$. 

Finally, for every fixed \(\lambda>1\), monotonicity of
\(u_{N,n}^{\rightarrow}\) gives
\begin{align*}
 u_{N,n}^{\rightarrow}
 &\le
 \frac{A_N(n)-A_N(\lfloor n/\lambda\rfloor)}
      {n-\lfloor n/\lambda\rfloor},\\
 u_{N,n}^{\rightarrow}
 &\ge
 \frac{A_N(\lfloor\lambda n\rfloor)-A_N(n)}
      {\lfloor\lambda n\rfloor-n}.
\end{align*}
Insert \eqref{eq:uniform-partial-survival}, let \(n\to\infty\), and
then let \(\lambda\downarrow1\).  This proves
\eqref{eq:uniform-survival-asymptotic} and the explicit formula
\begin{equation}
 \kappa_N^{\rightarrow}
 =
 \frac1{\sqrt\pi}
 \exp\left\{
   \sum_{n\ge1}\frac1n
   \left(\Pp(S_N(n)\ge0)-\frac12\right)
 \right\}.
 \label{eq:spitzer-constant}
\end{equation}
The two-sided estimate \eqref{eq:boundary-survival} follows from the same
uniform argument (enlarging the constants for bounded \(n\)).  Applying
it to the dual array proves all corresponding left-arrow statements.
The summand in \eqref{eq:spitzer-constant} is \(O(n^{-3/2})\) uniformly
in \(N\), while \eqref{eq:weighted-tv} gives convergence for each fixed
\(n\).  Dominated convergence, together with
\(\sigma_N\to\sigma\), proves
\eqref{eq:survival-constant-convergence}.
\end{proof}

For \(x,y,L\in\Z_{\ge0}\), put
\[
 \tau_{N,x}:=\inf\{j\ge1:x+S_N(j)<0\}.
\]

\begin{lemma}[Uniform height-dependent survival]
\label{lem:height-survival}
There is a constant \(C>0\) such that, for all sufficiently large \(N\)
and all \(n,H\in\Z_{\ge0}\),
\begin{equation}
 \Pp(\tau_{N,H}>n)
 \le C\min\left\{1,\frac{H+1}{\sqrt{n+1}}\right\}.
 \label{eq:height-survival}
\end{equation}
The same estimate holds for the dual array.
\end{lemma}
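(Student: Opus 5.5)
We need \(\Pp(\tau_{N,H}>n)\le C(H+1)/\sqrt{n+1}\). The bound by \(1\) is trivial, so assume \(H+1\le\sqrt{n+1}\). The plan is to reduce to the boundary survival estimate \eqref{eq:boundary-survival} of Lemma~\ref{lem:survivalprob}, using the strict ladder structure of the walk started at \(H\).

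\textbf{Step 1: a renewal decomposition at the running minimum.} Let \(T_0=0\) and let \(T_1<T_2<\dots\) be the successive strict descending ladder epochs of \(S_N\), i.e.\ the times at which \(S_N\) attains a new strict minimum. On \(\{\tau_{N,H}>n\}\) the running minimum up to time \(n\) stays \(\ge -H\). Each strict descending ladder step lowers the minimum by at least \(1\), since the walk is integer-valued. Hence at most \(H\) ladder epochs occur in \([1,n]\).

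Let \(k\le H\) be the index of the last ladder epoch \(T_k\le n\). After \(T_k\), the walk satisfies \(S_N(T_k+j)-S_N(T_k)\ge0\) for \(1\le j\le n-T_k\); this is exactly the weak-survival event defining \(u^{\rightarrow}_{N,n-T_k}\). Summing over \(k\) and over the value \(m=T_k\), the strong Markov property gives
\[
 \Pp(\tau_{N,H}>n)\le\sum_{k=0}^{H}\sum_{m=0}^{n}\Pp(T_k=m)\,u^{\rightarrow}_{N,n-m}.
\]

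\textbf{Step 2: splitting the sum.} Split the sum over \(m\) according to whether \(m\le n/2\) or \(m>n/2\).

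For \(m\le n/2\), use \(u^{\rightarrow}_{N,n-m}\le C/\sqrt{n+1}\) from \eqref{eq:boundary-survival}. Since \(\sum_m\Pp(T_k=m)\le1\) for each \(k\), this part contributes at most \(C(H+1)/\sqrt{n+1}\).

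For \(m>n/2\), we must bound \(\sum_{k\le H}\Pp(n/2<T_k\le n)\). Note that \(T_k>n/2\) forces the walk not to have made its \((k)\)-th strict ladder descent by time \(n/2\). Writing the event through the last ladder epoch before \(n/2\) as in Step 1, each term is bounded by \(\sum_{j\le k}\Pp(\text{no new strict minimum during a window of length}\ge n/2)\). By the same argument, each such probability is \(\lesssim u^{\rightarrow}_{N,\lfloor n/2\rfloor}\le C/\sqrt{n+1}\).

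This naive count gives \(H^2/\sqrt n\), which is too crude. I expect this to be the main obstacle.

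\textbf{Step 3 (the hard step): avoiding the extra factor of \(H\).} The fix I would try first is a cleaner identity that sidesteps Step 2. Let \(m^*\) be the time of the last minimum on \([0,n]\). Splitting at \(m^*\) and using duality (time reversal of the first segment) gives the exact formula
\[
 \Pp(\tau_{N,H}>n)=\sum_{m=0}^n \Pp\Bigl(T^{\leftarrow}\text{-type event on }[0,m],\ \min_{[0,m]}S_N\ge-H\Bigr)\,u^{\rightarrow}_{N,n-m}.
\]
Here the first factor is, after reversal, the probability that the reversed walk stays strictly positive on \([1,m]\) and ends at height \(\le H\).

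I would bound that first factor by
\[
 \min\Bigl\{u^{\leftarrow,\mathrm{strict}}_{N,m},\ \sum_{h=1}^{H}\Pp(\text{stay positive},\ \text{end at }h)\Bigr\}.
\]
The second term is controlled by \(C(H+1)^2\,m^{-3/2}\). To get this, split \([0,m]\) in halves: use survival \(\lesssim m^{-1/2}\) on each half, and the local bound \eqref{eq:local-bernstein} (\(\lesssim m^{-1/2}\)) for hitting the endpoint in the middle.

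Then sum over \(m\). For \(m\le n/2\), use \(u^{\rightarrow}_{N,n-m}\lesssim n^{-1/2}\) together with
\[
 \sum_m\min\{m^{-1/2},H^2m^{-3/2}\}\lesssim H.
\]
For \(m>n/2\), use \(H^2n^{-3/2}\) times \(\sum_{j\le n/2}u^{\rightarrow}_{N,j}\lesssim\sqrt n\), which gives \(H^2/n\le H/\sqrt n\) because \(H\le\sqrt n\). Together these give \(C(H+1)/\sqrt{n+1}\), uniformly in \(N\) because every input comes from Lemmas~\ref{lem:unconditioned} and~\ref{lem:survivalprob}.

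\textbf{The dual array.} The strict-versus-weak survival distinction is handled by noting that strict survival of the walk is weak survival of the walk shifted by \(-1\), which is dominated by \(u_{N,\cdot}\). Applying the whole argument with \(\check\nu_N\) gives the statement for the dual array.
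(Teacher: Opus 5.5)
Your Step~3 decomposition is exact. It is the split at the \emph{first} time the minimum over \([0,n]\) is attained: strict positivity before that time, weak survival \(u^{\rightarrow}_{N,n-m}\) after it. With the last minimum, as you wrote, the strict and weak constraints would be swapped. The gap is in the estimate that carries the argument, namely that the first factor
\[
 g_m(H):=\Pp\bigl(S_N(i)>S_N(m)\ \text{for } 0\le i<m,\ S_N(m)\ge -H\bigr)
\]
is at most \(C(H+1)^2m^{-3/2}\). After reversal, \(g_m(H)\) is a sum over \(1\le h\le H\) of killed dual kernels from \(0\) to \(h\). In your block decomposition, the block adjacent to the endpoint, read backwards from \(h\), is the original walk started at height \(h-1\) that must stay nonnegative for order \(m\) steps. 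Its survival probability is not \(\lesssim m^{-1/2}\): it is of order \((h+1)/\sqrt m\). Bounding it by \(C(h+1)/\sqrt m\) is exactly Lemma~\ref{lem:height-survival}, the statement you are proving, so the step is circular. This is also why the paper derives the kernel bound \eqref{eq:ABR-centered} only \emph{after} this lemma. Without circularity you can only prove \(g_m(H)\le CH/(m+1)\), using dual boundary survival on the first half and the local bound \eqref{eq:local-bernstein} on the second half, for each \(h\). With that bound alone, the range \(m\le n/2\) contributes \(n^{-1/2}\sum_{m\le n/2}\min\{m^{-1/2},H/m\}\). This is of order \(H(1+\log(n/H^2))/\sqrt n\), so you lose a logarithm.

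The missing idea is the counting you already used in Step~1, applied to \(g_m\) instead of to \(\Pp(T_k=m)\). \(g_m(H)\) is the probability that \(m\) is a strict descending ladder epoch with \(S_N(m)\ge -H\). Integer-valued strict ladder values are distinct elements of \(\{0,-1,\dots,-H\}\), so \(\sum_{m\ge0}g_m(H)\le H+1\) for every \(N\). For \(m\le n/2\), use this summability together with \(u^{\rightarrow}_{N,n-m}\le C/\sqrt{n+1}\). For \(m>n/2\), use the non-circular bound \(g_m(H)\le CH/m\) together with \(\sum_{j\le n/2}u^{\rightarrow}_{N,j}\le C\sqrt{n+1}\). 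This gives
\[
 \Pp(\tau_{N,H}>n)\le \frac{C(H+1)}{\sqrt{n+1}}+\frac{CH}{n}\,\sqrt{n+1},
\]
which closes your argument. The repaired proof genuinely differs from the paper's. The paper shows \(u^{\rightarrow}_{N,r_H+n}\ge \frac{c}{H+1}\Pp(\tau_{N,H}>n)\) with \(r_H=\lfloor K(H+1)^2\rfloor\). That needs the \emph{lower} bound in \eqref{eq:boundary-survival}, the uniform CLT, and the Harris inequality, in order to push a surviving walk up to height \(H\). Your ladder route needs only the upper survival bound and the local bound, but it relies on the lattice structure: strict ladder heights are at least one.
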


\begin{proof}
This is the triangular-array version of the height-dependent survival
estimate from Addario--Berry--Reed~\cite[Lemma~3]{ABR}.
We give the short proof needed here to keep the constants uniform in
\(N\). The case \(H=0\) follows from \eqref{eq:boundary-survival}, and the
estimate is trivial when \(H+1\ge\sqrt{n+1}\). Thus, assume \(H\ge1\) and
\(H+1<\sqrt{n+1}\). Choose \(r_H=\lfloor K(H+1)^2\rfloor\), where \(K\)
is a sufficiently large fixed constant. By \eqref{eq:boundary-survival},
\(u_{N,r_H}^{\rightarrow}\ge c(H+1)^{-1}\). The uniform central limit
theorem gives \(\Pp(S_N(r_H)\ge H)\ge c\) for a uniform constant \(c>0\).
Because the increments in the first \(r_H\) steps are independent and the
indicators of both events \(\{\tau_{N,0}>r_H\}\) and
\(\{S_N(r_H)\ge H\}\) are increasing functions of the increments, the
Harris inequality, in the form established by
Esary--Proschan--Walkup~\cite{EPW}, gives
\begin{equation}\label{eq:harrisoutput}
 \Pp\bigl(\tau_{N,0}>r_H,\ S_N(r_H)\ge H\bigr)
 \ge \Pp\bigl(\tau_{N,0}>r_H\bigr)
      \Pp\bigl(S_N(r_H)\ge H\bigr)
 \ge \frac{c}{H+1}.
\end{equation}
Conditioning on the height at time \(r_H\) and using monotonicity of
the survival probability in the starting height, gives
\[
 u_{N,r_H+n}^{\rightarrow}
 \ge
 \frac{c}{H+1}\,\Pp(\tau_{N,H}>n).
\]
Since \(r_H\le C(H+1)^2\le Cn\), the upper bound in
\eqref{eq:boundary-survival}, applied to \(r_H+n\), proves
\eqref{eq:height-survival}. The dual estimate is identical.
\end{proof}

Define
\[
 q_{N,L}(x,y):=
 \Pp\bigl(x+S_N(L)=y,\ \tau_{N,x}>L\bigr).
\]
If \(E\) is an additional path event, \(q_{N,L}(x,y;E)\) denotes the
same probability with the additional event \(E\) imposed. Thus,
\(q_{N,L}\) is the transition kernel of the walk killed upon entering the
negative half-line.

\paragraph{The remaining one-sided inputs.}
There are constants \(C,c>0\) such that, for all sufficiently large
\(N\) and all \(n,L,H,u\in\Z_{\ge0}\),
\begin{align}
 q_{N,L}(0,H)
 &\le C\frac{H+1}{(L+1)^{3/2}},                                  \label{eq:ABR-centered}\\
 \Pp\bigl(\tau_{N,0}>n,\ S_N(n)\ge u\bigr)
 &\le \frac{C}{\sqrt{n+1}}
 \exp\left\{-c\frac{u^2}{n+u+1}\right\}.                        \label{eq:meander-moderate}
\end{align}
The same estimates hold for the dual array.

For \eqref{eq:ABR-centered}, we split the walk into three parts. For
\(L\ge4\), set \(m=\lfloor L/4\rfloor\). If a path from \(0\) to
\(H\) remains nonnegative, then its first \(m\) steps stay above zero; after
reversing the final \(m\) increments, that block is a dual walk started at
height \(H\) and constrained to stay nonnegative; and, once these two outer
blocks are fixed, the middle block is forced to have one prescribed
lattice displacement. The three blocks are independent. By
\eqref{eq:boundary-survival}, \eqref{eq:height-survival}, and the uniform
local concentration estimate in Lemma~\ref{lem:unconditioned}, their
respective costs are bounded by
\[
 \frac{C}{\sqrt{L+1}},\qquad
 C\min\left\{1,\frac{H+1}{\sqrt{L+1}}\right\},\qquad
 \frac{C}{\sqrt{L+1}}.
\]
This gives \eqref{eq:ABR-centered}; the finitely many cases \(L<4\) are
absorbed into \(C\).

Finally, we prove \eqref{eq:meander-moderate}. For
\(0\le t\le t_0\), put
\[
 \nu_{N,t}(x)=e^{tx-\psi_N(t)}\nu_N(x),
 \qquad \psi_N(t)=\log\Ee e^{tX_{N,1}}.
\]
Let
\[
 U_{N,t}(s):=\sum_{n\ge0}s^n\Pp_{N,t}(\tau_{N,0}>n).
\]
The uniform tilted Berry--Esseen bound gives, for \(k\ge1\),
\[
 \left|\Pp_{N,t}(S_N(k)\ge0)-\frac12\right|
 \le C\bigl(k^{-1/2}+t\sqrt{k}\bigr)
\]
whenever \(t\sqrt{k}\le1\), and the trivial bound otherwise.  Put
\(a=1-s\), and note that \[\sum_{k\ge 1}s^kk^{-1/2}\le \frac{C}{\sqrt{1-s}}=\frac{C}{\sqrt{a}}\] for small positive $a$.  If \(t\le\sqrt a\), insertion in the
Sparre Andersen--Spitzer factorization gives
\[
 \log U_{N,t}(s)
 \le \frac12\log\frac1a+C,
\]
because \(t\sum_{k\ge1}s^k k^{-1/2}\le C\).  If
\(t>\sqrt a\), rewrite the same identity as
\[
 U_{N,t}(s)=\frac1{1-s}
 \exp\left\{-\sum_{k\ge1}\frac{s^k}{k}
                 \Pp_{N,t}(S_N(k)<0)\right\}.
\]
For \(k\le t^{-2}\), the lower Berry--Esseen estimate gives
\(\Pp_{N,t}(S_N(k)<0)\ge
1/2-C(t\sqrt{k}+k^{-1/2})\).  Hence
\begin{align*}
 &\sum_{k\ge1}\frac{s^k}{k}\Pp_{N,t}(S_N(k)<0)
 \ge \frac12\sum_{k=1}^{t^{-2}}\frac{s^k}{k}-Ct\sum_{k=1}^{t^{-2}}\frac{1}{\sqrt{k}}-C\\
 =&\frac{1}{2}\sum_{k=1}^{t^{-2}}\frac{1}{k}-\frac{1}{2}\sum_{k=1}^{t^{-2}}\frac{1-(1-a)^{k}}{k}-C\ge\log\frac{1}{t}-\frac{1}{2}at^{-2}-C\ge  \log\frac1t-C;
\end{align*}
the last step uses \(a<t^2\).  Thus, in both regimes,
\begin{equation*}
 U_{N,t}(s)
 \le C\bigl((1-s)^{-1/2}+t(1-s)^{-1}\bigr).
\end{equation*}
Since the coefficients are nonnegative, evaluating the generating-function
bound at \(s=e^{-1/(n+1)}\) gives
\[
 \sum_{j=0}^n \Pp_{N,t}(\tau_{N,0}>j)
 \le C\bigl(\sqrt{n+1}+t(n+1)\bigr).
\]
The survival probabilities are decreasing in \(j\), hence
\begin{equation}\label{eq:survivingprobtilted}
 \Pp_{N,t}(\tau_{N,0}>n)
 \le C\bigl((n+1)^{-1/2}+t\bigr).
\end{equation}
By change of measure and \(\psi_N(t)\le Ct^2\),
\[
 \Pp\bigl(\tau_{N,0}>n,S_N(n)\ge u\bigr)
 \le e^{-tu+n\psi_N(t)}\Pp_{N,t}(\tau_{N,0}>n).
\]
If \(u\le n+1\), choose \(t=\alpha u/(n+1)\), with \(\alpha>0\)
small.  The preceding display gives
\[
 \Pp\bigl(\tau_{N,0}>n,S_N(n)\ge u\bigr)
 \le
 \frac{C}{\sqrt{n+1}}
 \left(1+\frac{u}{\sqrt{n+1}}\right)
 e^{-c u^2/(n+1)}.
\]
Absorbing the factor $\frac{u}{\sqrt{n+1}}$ by the exponential term after decreasing \(c\) proves
\eqref{eq:meander-moderate} in this range. If \(u>n+1\), take a fixed
small \(t=\alpha\).  Then change of measure gives \(Ce^{-cu}\); since
\(u>n\), the missing factor \((n+1)^{-1/2}\) is again absorbed by the exponential after decreasing
\(c\).  This proves 
\eqref{eq:meander-moderate}. The dual estimates are identical.

\begin{proposition}[Uniform one-sided endpoint and maximum estimates]
\label{prop:killed-bounds}
There are constants \(C,c>0\) such that, for all sufficiently large
\(N\) and all \(L,H,R\in\Z_{\ge0}\),
\begin{align}
 q_{N,L}(0,H)
 &\le C\frac{H+1}{(L+1)^{3/2}}
 \exp\left\{-c\frac{H^2}{L+H+1}\right\},         \label{eq:forward-endpoint}\\
 q_{N,L}\bigl(0,H;\max_{j\le L}S_N(j)-H\ge R\bigr)
 &\le C\frac{H+1}{(L+1)^{3/2}}
 \exp\left\{
   -c\frac{H^2}{L+H+1}
   -c\frac{R^2}{L+R+1}
 \right\}.                   \label{eq:forward-relative-max}
\end{align}
Consequently,
\begin{align}
 \sum_{H\ge0}
 q_{N,L}\bigl(0,H;\max_{j\le L}S_N(j)\ge R\bigr)
 &\le \frac{C}{\sqrt{L+1}}
 \exp\left\{-c\frac{R^2}{L+R+1}\right\},
 \label{eq:summed-reverse}\\
 q_{N,L}\bigl(0,0;\max_{j\le L}S_N(j)\ge R\bigr)
 &\le \frac{C}{(L+1)^{3/2}}
 \exp\left\{-c\frac{R^2}{L+R+1}\right\}.
 \label{eq:excursion-max}
\end{align}
The same statements hold for the dual array.
\end{proposition}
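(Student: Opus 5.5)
The plan is to split the path at its midpoint and combine the one-sided inputs \eqref{eq:ABR-centered}, \eqref{eq:meander-moderate} and \eqref{eq:height-survival} with the unconditioned bounds of Lemma~\ref{lem:unconditioned}, one half at a time. Throughout, \(L\ge4\); bounded \(L\) are absorbed into \(C\), using the pure exponential bounds \eqref{eq:local-bernstein} and \eqref{eq:local-max}.

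\emph{Endpoint bound \eqref{eq:forward-endpoint}.} Set \(m=\lfloor L/2\rfloor\). If a nonnegative path reaches \(H\) at time \(L\), then either \(S_N(m)\ge H/2\), or the reversed second half rises by at least \(H/2\).
\begin{itemize}[label=--]
\item In the first case, apply \eqref{eq:meander-moderate} to the first half with \(u=H/2\). This gives \(C(L+1)^{-1/2}e^{-cH^2/(L+H+1)}\).
\item The second half then runs from the height \(S_N(m)=x\) to \(H\) and stays nonnegative. Reverse it: it becomes a dual walk from \(H\) to \(x\) that stays nonnegative. Summing over \(x\) is not needed. We only need a bound on the probability of reaching a fixed endpoint while staying nonnegative, uniform in \(x\), namely \(C(H+1)(L+1)^{-1}\). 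To get it, use the three-block decomposition already used for \eqref{eq:ABR-centered}, now with the first block started at height \(x\): one block gives the \(\min\{1,(H+1)/\sqrt L\}\) factor, the middle block gives the local factor \(C/\sqrt L\), and the block started at \(x\) is bounded by \(1\). Then sum the first-half probability over \(x\ge H/2\).
\item The second case is symmetric. Reverse time, so the dual walk starts at \(H\) and drops by at least \(H/2\) in \(m\) steps. This costs \(Ce^{-cH^2/(L+H+1)}\) by \eqref{eq:local-max}. The other half contributes a factor \((H+1)(L+1)^{-1}\)-type bound, from \eqref{eq:boundary-survival} combined with a local factor.
\end{itemize}
In practice I would organize this as three blocks of length \(\sim L/3\): the forward meander, a middle block handled by the local bound, and a dual meander from \(H\). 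Whichever block carries a displacement of order \(H\) receives the exponential factor, via \eqref{eq:meander-moderate} or \eqref{eq:local-bernstein}. The other blocks supply \(L^{-1/2}\), \(L^{-1/2}\) and \(\min\{1,(H+1)/\sqrt L\}\). The product matches \eqref{eq:forward-endpoint} after noting that \(\min\{1,(H+1)/\sqrt L\}\le (H+1)/\sqrt L\).

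\emph{Relative maximum \eqref{eq:forward-relative-max}.} Add the event that the path exceeds \(H+R\) at some time. The exceedance happens in one of the three blocks. In that block, replace the factor above by the maximal version:
\begin{itemize}[label=--]
\item for the middle block, use \eqref{eq:local-max}, which keeps the endpoint;
\item for the outer blocks, use the maximal martingale bound from the proof of Lemma~\ref{lem:unconditioned}, combined with a meander or survival estimate;
\item alternatively, apply the strong Markov property at the first passage above \(H+R\) and use \eqref{eq:forward-endpoint}-type bounds for the remaining piece, which must descend by \(R\) while staying nonnegative.
\end{itemize}
\textbf{Main obstacle.} The hardest step is getting both exponential factors and the prefactor \((H+1)L^{-3/2}\) simultaneously. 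I expect this requires the case split \(R\ge H\) versus \(R<H\). When \(R<H\), the large height \(H\) already pays through \(H^2\). When \(R\ge H\), a rise by \(R\) occurs either on an outer block, paid by a tilted meander estimate such as \eqref{eq:survivingprobtilted}, or on the middle block, paid by \eqref{eq:local-max}. I would first try the first-passage decomposition at the exceedance time, since it converts the maximum constraint into a displacement that \eqref{eq:meander-moderate} already controls.

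\emph{Consequences.} For \eqref{eq:summed-reverse}, split according to whether \(H\ge R/2\) or \(H<R/2\).
\begin{itemize}[label=--]
\item If \(H\ge R/2\), use \eqref{eq:forward-endpoint}. The sum \(\sum_{H\ge R/2}(H+1)L^{-3/2}e^{-cH^2/(L+H+1)}\) is at most \(CL^{-1/2}e^{-c'R^2/(L+R+1)}\), as seen by comparing with a Gaussian or exponential integral.
\item If \(H<R/2\), the maximum exceeds \(H+R/2\), so use \eqref{eq:forward-relative-max} with \(R/2\). Summing \((H+1)L^{-3/2}e^{-cH^2/(L+H+1)}\) over \(H\) gives \(CL^{-1/2}\), times \(e^{-cR^2/(L+R+1)}\).
\end{itemize}
Finally, \eqref{eq:excursion-max} is \eqref{eq:forward-relative-max} with \(H=0\). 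The dual statements follow by applying everything to \(\check\nu_N\), since all the inputs used hold for the dual array.
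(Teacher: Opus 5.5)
Your treatment of \eqref{eq:forward-endpoint}, \eqref{eq:summed-reverse} and \eqref{eq:excursion-max} is sound. You are also right that \eqref{eq:excursion-max} is literally \eqref{eq:forward-relative-max} at $H=0$, which is shorter than the paper's separate midpoint split.

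The gap is \eqref{eq:forward-relative-max} itself, which you never prove: you list three possible routes and call it the main obstacle. Your reductions are fine. For $R\le H$ it follows from \eqref{eq:forward-endpoint} after halving $c$, since $f_L(x)=x^2/(L+x+1)$ is increasing; for $R\le\sqrt{L+1}$ it is immediate. In the remaining regime, however, every decomposition you propose needs one outer block to pay two costs \emph{at the same time}, on the same increments:
its survival cost, namely $(L+1)^{-1/2}$ for the forward block from $0$ and $\min\{1,(H+1)/\sqrt{L+1}\}$ for the reversed block from $H$; and the cost $e^{-cf_L(R)}$ of reaching height $H+R$.

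No such joint estimate is among your inputs, and ``combining'' the maximal martingale bound with a survival bound cannot produce it. The events $\{\tau_{N,0}>m\}$ and $\{\max_{j\le m}S_N(j)\ge R\}$ are both increasing, hence positively correlated by Harris. So there is no free inequality bounding their joint probability by the product, and the only free upper bound is the minimum of the two. For $R^2=K(L+1)\log(L+1)$ with $K$ small, that minimum is of order $(L+1)^{-1/2}$. The required bound is $(L+1)^{-1/2}e^{-cf_L(R)}$, smaller by a positive power of $L$ that no adjustment of constants absorbs.

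The first-passage route has the same hole. After the passage above $H+R$, the remaining piece, reversed, is a dual walk from height $H$ (not $0$) that stays nonnegative and rises by at least $R$, so \eqref{eq:meander-moderate} does not cover it. You would need a killed-kernel bound from height $H$ carrying both the factor $H+1$ and the exponential, uniformly in the remaining time.

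The missing idea is a genuine joint survival-plus-displacement estimate, and the paper obtains it by tilting. It changes measure at a fixed endpoint with $t=\alpha B/(m+B+1)$ and uses the tilted survival bound \eqref{eq:survivingprobtilted} together with its height-dependent version (Harris plus stochastic domination of the tilted walk). The resulting factor $t\sqrt{m+1}$ is then absorbed into the exponential. This gives the strip kernel bound \eqref{eq:strip-endpoint-short} with the boundary factor $d_A(B)$. The path is then split at the first time of its maximum, and the post-maximum part is reflected. The double sum over the maximum $A$ and the time $k$ is controlled by \eqref{eq:first-passage-sum-short} and \eqref{eq:large-B-sum-short}.

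If you prefer to keep your three blocks, you would have to prove two meander-maximum bounds: $\Pp(\tau_{N,0}>m,\ \max_{j\le m}S_N(j)\ge R)\le C(m+1)^{-1/2}e^{-cf_m(R)}$, and its dual analogue from height $H$ with prefactor $\min\{1,(H+1)/\sqrt{m+1}\}$. One way is to tilt, stop at the passage time, and use that from height at least $R\ge\sqrt m$ the positively tilted walk survives $m$ more steps with probability bounded below, so as to compare with \eqref{eq:survivingprobtilted}. As written, this step is absent.
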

\begin{remark}
Because of the exponential tail, the prefactor
\((H+1)/(L+1)^{3/2}\) in \eqref{eq:forward-endpoint} and
\eqref{eq:forward-relative-max} may be replaced by the weaker prefactor
\((L+1)^{-1}\).
\end{remark}

\begin{proof}
The case \(L=0\) is immediate after enlarging \(C\), so assume \(L\ge1\)
and write \(n=L+1\).

We first prove \eqref{eq:forward-endpoint}. If \(H\le\sqrt n\), then
\eqref{eq:ABR-centered} already gives the result, since
\(H^2/(L+H+1)\le1\).

Suppose next that \(\sqrt n<H\le L\). Split at
\(m=\lceil L/2\rceil\) and put \(r=L-m\). Dropping the positivity
constraint after time \(m\) and using the Markov property,
\[
 q_{N,L}(0,H)
 \le \sum_{x\ge0}q_{N,m}(0,x)\Pp(S_N(r)=H-x)
 =:I_1+I_2,
\]
where \(I_1\) is the sum over \(x\le H/2\) and \(I_2\) the sum over
\(x>H/2\). For \(I_1\), \eqref{eq:local-bernstein} gives, uniformly in
\(x\le H/2\),
\[
 \Pp(S_N(r)=H-x)
 \le \frac{C}{\sqrt n}
 \exp\left\{-c\frac{H^2}{L+H+1}\right\}.
\]
Summing the killed kernel over \(x\) and using
\eqref{eq:boundary-survival},
\[
 I_1
 \le \frac{C}{n}
 \exp\left\{-c\frac{H^2}{L+H+1}\right\}.
\]
For \(I_2\), use only the uniform local concentration bound on the second
half and \eqref{eq:meander-moderate} on the first. Since \(H\le L\) and
\(m=\lceil L/2\rceil\), we have \(H/2\le m\), and hence
\[
 I_2
 \le \frac{C}{\sqrt n}
 \Pp\bigl(\tau_{N,0}>m,S_N(m)>H/2\bigr)
 \le \frac{C}{n}
 \exp\left\{-c\frac{H^2}{n}\right\}.
\]
Thus, throughout \(\sqrt n<H\le L\),
\[
 q_{N,L}(0,H)
 \le \frac{C}{n}
 \exp\left\{-c\frac{H^2}{L+H+1}\right\}.
\]
Since \(H+1>\sqrt n\), this is stronger than
\eqref{eq:forward-endpoint}.

Finally, if \(H>L\), simply drop the positivity constraint and use
\eqref{eq:local-bernstein}:
\[
 q_{N,L}(0,H)
 \le \frac{C}{\sqrt n}
 \exp\left\{-c\frac{H^2}{L+H+1}\right\}.
\]
Here \(H+1\ge n\), so this is again stronger than
\eqref{eq:forward-endpoint}. This proves the forward endpoint estimate.

We next prove \eqref{eq:forward-relative-max}. For \(0\le B\le A\), set
\[
 q_{N,m}^{[0,A]}(0,B)
 :=\Pp\bigl(S_N(m)=B,\ 0\le S_N(j)\le A,\ 0\le j\le m\bigr)
\]
and put \(d_A(B)=(B+1)\wedge(A-B+1)\).  Then
\begin{equation}
 q_{N,m}^{[0,A]}(0,B)
 \le C\frac{d_A(B)}{(m+1)^{3/2}}
 \exp\left\{-c\frac{B^2}{m+B+1}\right\}.
 \label{eq:strip-endpoint-short}
\end{equation}
The same estimate holds for the dual array.

We include the proof because the factor \(d_A(B)\), especially in the case
\(B=A\), is exactly what makes the maximum decomposition summable.
For \(0\le t\le t_0\), recall from \eqref{eq:survivingprobtilted}
that
\begin{equation}
 \Pp_{N,t}(\tau_{N,0}>m)
 \le C\bigl((m+1)^{-1/2}+t\bigr).
 \label{eq:tilted-boundary-survival-short}
\end{equation}
The height-dependent version is
\begin{equation}
 \Pp_{N,t}(\tau_{N,h}>m)
 \le C(h+1)\bigl((m+1)^{-1/2}+t\bigr).
 \label{eq:tilted-height-survival-short}
\end{equation}
Indeed, when \(h+1\ge\sqrt{m+1}\) this is trivial.  Otherwise choose
\(r_h=\lfloor K(h+1)^2\rfloor\).  The positively tilted law stochastically
dominates the centered law, so by \eqref{eq:harrisoutput},
\[
 \Pp_{N,t}(\tau_{N,0}>r_h,\ S_N(r_h)\ge h)
 \ge \frac{c}{h+1}.
\]
The Markov property and \eqref{eq:tilted-boundary-survival-short}, applied
at time \(r_h+m\), imply
\eqref{eq:tilted-height-survival-short}.

For \(0\le B\le A\), define the analogous kernel under the
tilted law by
\[
 q_{N,m}^{[0,A],t}(0,B)
 :=
 \Pp_{N,t}\bigl(
   S_N(m)=B,\ 
   0\le S_N(j)\le A,\ 0\le j\le m
 \bigr)
\]
To estimate this kernel,
decompose a path of length \(m\) into three consecutive blocks of
comparable lengths, as illustrated in
Figure~\ref{fig:three-block-strip}. \begin{figure}[ht]
\centering
\resizebox{\textwidth}{!}{%
\begin{tikzpicture}[
    x=1cm,
    y=0.72cm,
    original/.style={very thick, blue!70!black},
    transformed/.style={very thick, red!75!black},
    boundary/.style={thick, black},
    cut/.style={densely dashed, gray!75},
    point/.style={circle, fill, inner sep=1.7pt},
    >=Latex
]

\def\A{6}
\def\B{2}


\draw[gray!55] (0,0) -- (8.4,0);
\draw[boundary] (0,\A) -- (8.4,\A)
    node[right] {$A$};

\draw[original]
plot[smooth, tension=0.65] coordinates {
    (0,0)
    (0.6,1.0)
    (1.1,0.6)
    (1.7,2.0)
    (2.3,2.7)
    (2.8,2.2)
    (3.4,3.6)
    (4.0,3.0)
    (4.6,4.4)
    (5.2,3.5)
    (5.6,4.0)
    (6.1,4.8)
    (6.7,3.7)
    (7.2,5.0)
    (7.7,3.6)
    (8.05,2.8)
    (8.4,\B)
};

\draw[cut] (2.8,0) -- (2.8,\A);
\draw[cut] (5.6,0) -- (5.6,\A);

\node[point] at (0,0) {};
\node[below left] at (0,0) {$0$};

\node[point] at (8.4,\B) {};
\draw[densely dotted, gray!75]
    (8.4,\B) -- (8.85,\B)
    node[right, black] {$B$};

\node[below] at (2.8,0) {$m_1$};
\node[below] at (5.6,0) {$m_2$};
\node[below] at (8.4,0) {$m$};

\node[above] at (1.4,\A) {first block};
\node[above] at (4.2,\A) {second block};
\node[above] at (7.0,\A) {third block};

\draw[->, thick]
    (8.9,3.3) -- (11.0,3.3)
    node[midway, above, align=left, font=\small]
    {reverse time\\[-1mm] and use \(A-S_N(\cdot)\)};


\begin{scope}[xshift=11.4cm]

\node[above] at (1.4,\A)
    {transformed third block};

\draw[gray!55] (0,0) -- (2.8,0);
\draw[boundary] (0,\A) -- (2.8,\A)
    node[right] {$A$};

\draw[transformed]
plot[smooth, tension=0.65] coordinates {
    (0,4.0)
    (0.35,3.2)
    (0.70,2.4)
    (1.20,1.0)
    (1.70,2.3)
    (2.30,1.2)
    (2.80,2.0)
};


\node[point] at (0,4.0) {};
\node[above] at (0,4.2) {$A-B$};

\node[point] at (2.8,2.0) {};
\node[right] at (2.8,2.0)
    {$A-S_N(m_2)$};

\node[below] at (0,0) {$0$};
\node[below] at (2.8,0) {$m-m_2$};

\end{scope}

\end{tikzpicture}}
\caption{Three-block decomposition of a path with height in
\([0,A]\), and the time-reversed third block measured downward from the
upper boundary \(A\).}
\label{fig:three-block-strip}
\end{figure} The first block
must survive above the lower boundary.  Reversing the last block from
the endpoint \(B\) gives a dual \((-t)\)-tilted walk started at \(B\),
which is stochastically dominated by the centered dual walk.  Applying
\eqref{eq:tilted-boundary-survival-short} to the first block,
Lemma~\ref{lem:height-survival} to the last block, and the uniform tilted
local concentration bound to the middle block gives
\[
 q_{N,m}^{[0,A],t}(0,B)
 \le C\frac{B+1}{(m+1)^{3/2}}
       \bigl(1+t\sqrt{m+1}\bigr).
\]
Alternatively, reverse the last block in time and measure its height
downward from the upper boundary \(A\).  The
result is a \(t\)-tilted walk started at height \(A-B\).  Using
\eqref{eq:tilted-height-survival-short} on that block gives
\[
 q_{N,m}^{[0,A],t}(0,B)
 \le C\frac{A-B+1}{(m+1)^{3/2}}
       \bigl(1+t\sqrt{m+1}\bigr)^2.
\]
Taking the better bound and changing measure at the fixed endpoint,
\[
 q_{N,m}^{[0,A]}(0,B)
 =e^{-tB+m\psi_N(t)}q_{N,m}^{[0,A],t}(0,B).
\]
With \(t=\alpha B/(m+B+1)\) for some small enough $\alpha>0$, the exponential factor is at most
\(e^{-cB^2/(m+B+1)}\); the factor
\(t\sqrt{m+1}\) is absorbed by the exponential after decreasing \(c\).  This proves
\eqref{eq:strip-endpoint-short}.

We now turn to the maximum.  If \(R\le\sqrt{L+1}\), then
\eqref{eq:forward-endpoint} immediately implies the desired estimate,
since \(R^2/(L+R+1)\le1\).  Assume henceforth that
\(R>\sqrt{L+1}\).  On the event in
\eqref{eq:forward-relative-max}, let
\[
 A:=\max_{0\le j\le L}S_N(j),
 \qquad K:=\min\{j:S_N(j)=A\}.
\]
Then \(A\ge H+R\).  Before \(K\) the path goes from \(0\) to \(A\)
inside \([0,A]\); reflecting the part after \(K\) about \(A\), without
reversing time, turns it into a dual path from \(0\) to \(A-H\) inside
the same strip.  Dropping only the requirement that \(K\) be the first time of
the maximum gives
{
\begin{equation*}
 \begin{split}
 &q_{N,L}\bigl(0,H;\max_{j\le L}S_N(j)-H\ge R\bigr)\\
 &\quad\le
 \sum_{A\ge H+R}\sum_{k=0}^{L}
 q_{N,k}^{[0,A]}(0,A)
 \check q_{N,L-k}^{[0,A]}(0,A-H).
 \end{split}
\end{equation*}}
Write \(B=A-H\) and
\(d=(B+1)\wedge(H+1)\).  Applying
\eqref{eq:strip-endpoint-short} to the two factors yields
\begin{equation}
 C\sum_{B\ge R}d\sum_{k=0}^{L}
 \frac{
 e^{-c(H+B)^2/(k+H+B+1)}
 }
 {(k+1)^{3/2}}\frac{e^{-cB^2/(L-k+B+1)}}{(L-k+1)^{3/2}}.
 \label{eq:max-double-sum-short}
\end{equation}

It remains to estimate \eqref{eq:max-double-sum-short}. For \(x\ge0\), we use the elementary bound
\begin{equation}
 \sum_{m\ge0}\frac1{(m+1)^{3/2}}
 \exp\left\{-c\frac{x^2}{m+x+1}\right\}
 \le \frac{C}{x+1},
 \label{eq:first-passage-sum-short}
\end{equation}
which follows by splitting at \(m=x\) and \(m=x^2\) and bounding the
three resulting sums separately.
Put \(f_L(x)=x^2/(L+x+1)\).  On the first half of the sum,
\(0\le k\le\lfloor L/2\rfloor\), bound the second fraction in
\eqref{eq:max-double-sum-short} by its maximum and sum the first fraction
using \eqref{eq:first-passage-sum-short}.  On the second half, reverse the
roles of the two fractions.  This gives
\begin{align}
 &\sum_{k=0}^{L}
 \frac{e^{-c(H+B)^2/(k+H+B+1)}
       e^{-cB^2/(L-k+B+1)}}
      {(k+1)^{3/2}(L-k+1)^{3/2}}\notag\\
 &\quad\le
 \frac{C}{(L+1)^{3/2}}
 e^{-c'f_L(H+B)-c'f_L(B)}
 \left(\frac1{H+B+1}+\frac1{B+1}\right).
 \label{eq:k-convolution-short}
\end{align}
By definition we have \(d\le H+1\). The parenthesis is
at most \(2/(B+1)\).  Moreover \(f_L(x)\) is increasing,
so 
\(f_L(H+B)\ge f_L(H)\).  Finally,
\begin{equation}
 \sum_{B\ge R}\frac1{B+1}e^{-c f_L(B)}
 \le C e^{-c'f_L(R)},
 \qquad R>\sqrt{L+1},
 \label{eq:large-B-sum-short}
\end{equation}
as follows by splitting at \(B=L+1\) and comparing with the Gaussian
integral below that point and the exponential integral above it.
Combining \eqref{eq:k-convolution-short} and \eqref{eq:large-B-sum-short} proves
\eqref{eq:forward-relative-max}.

To obtain \eqref{eq:summed-reverse}, split the height sum into
\(H\ge R/2\) and \(H<R/2\). In the first range, use
\eqref{eq:meander-moderate}; in the second, use
\eqref{eq:forward-relative-max} with \(R-H\ge R/2\).
Finally, split a \(0\)-to-\(0\) path at
\(m=\lfloor L/2\rfloor\), according to the half on which its maximum is
attained.  For the first-half contribution, the Markov property gives
\[
 \sum_{H\ge0}q_{N,m}(0,H;\max_{j\le m}S_N(j)\ge R)
 q_{N,L-m}(H,0).
\]
By time reversal, the second factor is a dual zero-to-\(H\) kernel and
is bounded by \eqref{eq:forward-endpoint}.  Split again at \(H=R/2\):
use endpoint decay for \(H\ge R/2\), and
\eqref{eq:forward-relative-max} for \(H<R/2\).  The resulting sum is
\[
 \frac{C}{(L+1)^3}e^{-cR^2/(L+R+1)}
 \sum_{H\ge0}(H+1)^2e^{-cH^2/(L+H+1)}
 \le \frac{C}{(L+1)^{3/2}}e^{-c'R^2/(L+R+1)}.
\]
The other half is handled identically for the dual array.  This proves
\eqref{eq:excursion-max}.
\end{proof}

\section{Killed local limits}
Recall that the killed Brownian kernels 
{
\begin{align*}
 \mathbf F(t;y,x)
 &:=\frac1{\sqrt{2\pi t}}
 \left(
 e^{-(y-x)^2/(2t)}-e^{-(y+x)^2/(2t)}
 \right),\\
 \mathbf F_0(t;y)
 &:=\sqrt{\frac2\pi}\frac{y}{t^{3/2}}
 e^{-y^2/(2t)},\\
 \mathbf F_{0,0}(t)
 &:=\sqrt{\frac2\pi}\,t^{-3/2}.
\end{align*}}
Here \(\mathbf F(t;y,x)\) is the transition density from \(x\) to
\(y\) at time \(t\) of Brownian motion killed upon hitting zero.  The
functions \(\mathbf F_0(t;y)\) and \(\mathbf F_{0,0}(t)\) are the
corresponding boundary scaling limits as one or both endpoints tend to
zero.

We record two uniform inputs that will be used in the boundary cases.
First, the same three-block argument as above, now with arbitrary
endpoints, gives
\begin{equation}
 q_{N,L}(x,y)
 \le \frac{C}{\sqrt{L+1}}
 \min\left\{1,\frac{x+1}{\sqrt{L+1}}\right\}
 \min\left\{1,\frac{y+1}{\sqrt{L+1}}\right\}.
 \label{eq:two-boundary-kernel}
\end{equation}
Indeed, use height-dependent survival on the first quarter, the dual
height-dependent survival on the last quarter, and local concentration
on the middle half. 

Second, let \(m_N\to\infty\), and let \(\Pp_{N,m_N}^{+}\) be the law of
the first \(m_N\) steps conditional on \(\tau_{N,0}>m_N\).  
\begin{lemma}[Triangular-array meander convergence]\label{lem:meanderconvergence}

\textup{(a).}
\begin{equation}
 \left(\frac{S_N(\lfloor m_Ns\rfloor)}
 {\sigma_N\sqrt{m_N}}\right)_{0\le s\le1}
 \Longrightarrow (\mathfrak m(s))_{0\le s\le1},
 \label{eq:triangular-meander}
\end{equation}
where \(\mathfrak m\) is the standard Brownian meander. 
In particular,
\(\mathfrak m(1)\) has density \(z e^{-z^2/2}\mathbf1_{z>0}\).

\textup{(b).}
The endpoint \(S_N(m_N)/(\sigma_N\sqrt{m_N})\) is uniformly tight.
\end{lemma}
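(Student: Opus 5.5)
Part (b) is immediate from the one-sided inputs and comes first. By \eqref{eq:meander-moderate} and the lower bound in \eqref{eq:boundary-survival},
\[
 \Pp^{+}_{N,m_N}\bigl(S_N(m_N)\ge u\sigma_N\sqrt{m_N}\bigr)
 \le \frac{C}{c}\exp\left\{-c\frac{u^2 m_N}{m_N+u\sqrt{m_N}+1}\right\},
\]
which tends to zero as \(u\to\infty\), uniformly in \(N\). So the endpoint is uniformly tight; in fact it has uniformly Gaussian-type tails.

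For (a), I will prove convergence of finite-dimensional distributions together with tightness in the Skorokhod space, as in Bolthausen's classical meander argument. The strategy is to condition on an initial time \(\varepsilon m_N\), at which the walk is at a macroscopic height, and then apply the unconditioned triangular-array Donsker theorem. That Donsker theorem holds by Lindeberg, since the tails are uniformly exponential and \(\sigma_N\to\sigma\).

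Fix \(\varepsilon\in(0,1)\) and \(k=\lfloor\varepsilon m_N\rfloor\). By the Markov property, the law under \(\Pp^+_{N,m_N}\) of the path after time \(k\), given \(S_N(k)=x\), is the law of the walk started at \(x\) and conditioned on \(\tau_{N,x}>m_N-k\). I will carry out three steps.
\begin{enumerate}[label=(\roman*)]
\item \emph{Positive macroscopic height at time \(k\).} Under \(\Pp^+_{N,m_N}\), the probability that \(S_N(k)\le\delta\sqrt{m_N}\) tends to zero as \(\delta\to0\), uniformly in \(N\) and for fixed \(\varepsilon\). To see this, sum the two-boundary kernel \eqref{eq:two-boundary-kernel} over \(x\le\delta\sqrt{m_N}\), multiply by the survival bound \eqref{eq:height-survival} for the remaining \(m_N-k\) steps, and divide by \(u^{\rightarrow}_{N,m_N}\ge c m_N^{-1/2}\). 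The result is \(O(\delta^2/\varepsilon^{\cdot})\to0\).
\item \emph{Survival probability at macroscopic heights.} For \(x=y\sigma_N\sqrt{m_N}\) with \(y\ge\delta\), the unconditioned Donsker theorem gives
\[
 \Pp(\tau_{N,x}>m_N-k)\to\Pp_y\bigl(\text{BM survives to time }1-\varepsilon\bigr)
\]
uniformly on compacts in \(y\). The limiting event has a null boundary, and since the walk's overshoot is exponentially small this identifies the limit. The same argument gives the conditional law of the remaining path as Brownian motion from \(y\) conditioned to stay positive.
\item \emph{Law of the height at time \(k\).} I need the law of \(S_N(k)/\sqrt{m_N}\) under \(\Pp^+_{N,m_N}\), up to an error vanishing as \(\varepsilon\to0\). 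Two routes are available. The first writes this law as \(\Pp(S_N(k)\in\cdot,\ \tau>k)\times\) (survival from there), divided by \(u_{N,m_N}\). The quantities \(u_{N,k}/u_{N,m_N}\to\varepsilon^{-1/2}\) uniformly by \eqref{eq:uniform-survival-asymptotic}, so this route reduces to knowing the meander endpoint at time \(k\), which is circular. I will avoid it and use the second route: the initial segment up to \(k\) has height \(O_P(\sqrt{\varepsilon m_N})\) by \eqref{eq:summed-reverse} and \eqref{eq:boundary-survival}, so on \([0,\varepsilon]\) the path is uniformly small. The finite-dimensional laws on \([\varepsilon,1]\) are then mixtures of conditioned Brownian laws over the time-\(k\) height, and tightness holds.
\end{enumerate}

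To close the argument, I take a subsequential limit, which exists by tightness. Every subsequential limit \(\mathfrak m'\) is then a process such that \(\mathfrak m'(0)=0\), \(\mathfrak m'\ge0\), and for every \(\varepsilon\) the process after time \(\varepsilon\) is Brownian motion killed at zero, conditioned on survival to time \(1\). The process is continuous at \(0\) because \(\sup_{s\le\varepsilon}\mathfrak m'(s)\to0\) in probability, which is the \(\varepsilon\)-smallness above. Bolthausen's characterization says this family is exactly the Brownian meander, since the \(h\)-transform entrance law from \(0\) is unique. Hence the whole sequence converges. Tightness of the rescaled paths follows from the same decomposition: on \([\varepsilon,1]\) it is inherited from the unconditioned Donsker theorem, because we condition on events of probability bounded below by step (i); near zero it follows from \eqref{eq:summed-reverse} applied on the window \([0,\varepsilon m_N]\), divided by \(u_{N,m_N}\).

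The main obstacle is the uniqueness and entrance-law identification near time zero, which is step (iii) together with the characterization. My first attempt will be to show directly, from \eqref{eq:summed-reverse}, \eqref{eq:boundary-survival} and \eqref{eq:uniform-survival-asymptotic}, that \(\sup_{s\le\varepsilon}\) of the rescaled conditioned path is \(O_P(\sqrt\varepsilon)\) uniformly in \(N\), and then to invoke the uniqueness of the meander as the weak limit of Brownian motion from \(y\downarrow0\) conditioned to survive.
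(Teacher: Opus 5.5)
\paragraph{Comparison with the paper's proof.}
Your argument is correct in substance, but it follows a different route from the paper. The paper never conditions at a small time. It uses Bolthausen's path transformation \(\Gamma\): restart the path at the first time \(\vartheta\) after which it stays above its current value for one unit of time. Via the i.i.d.\ strict descending ladder blocks, \(\Gamma(\mathcal S_{N,m})\) has exactly the law of the conditioned walk for every finite \(m\). Since \(\Gamma\) is continuous at Wiener-almost every path, \eqref{eq:triangular-meander} then follows from the triangular-array Donsker theorem by continuous mapping, and (b) is just tightness of a convergent sequence. That proof uses no killed estimates, so it would work wherever only the unconditioned invariance principle is available. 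Its cost is that it rests on the nontrivial almost-sure continuity of \(\Gamma\).

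Your route is instead an Iglehart / Durrett--Iglehart--Miller compactness--uniqueness scheme, with these ingredients:
\begin{itemize}
\item tightness near \(0\), from \eqref{eq:summed-reverse} divided by \(u_{N,m_N}^{\rightarrow}\), which gives roughly \(C\varepsilon^{-1/2}e^{-c\eta^2/\varepsilon}\);
\item tightness on \([\varepsilon,1]\);
\item no mass near height \(0\) at time \(\varepsilon\);
\item convergence of the walk restarted at a macroscopic height and conditioned to survive;
\item identification of every subsequential limit.
\end{itemize}
It is longer, but each step is elementary given Section~2. Your (b) is also independent of (a) and quantitative: it gives uniform Gaussian-type tails, which is exactly what is used later to cut off large midpoint heights.

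\paragraph{Points to tighten.}
\begin{itemize}
\item In step (i) the bound is of order \(\delta^3\varepsilon^{-3/2}\), not \(\delta^2\).
\item Tightness on \([\varepsilon,1]\) does not come from step (i). It comes from dropping the survival constraint after time \(k\), at the cost of the ratio \(u_{N,k}^{\rightarrow}/u_{N,m_N}^{\rightarrow}\le C\varepsilon^{-1/2}\) from \eqref{eq:boundary-survival}.
\item The ``first route'' in (iii) is unnecessary: compactness removes any need to know the law of \(S_N(k)\) explicitly.
\item The identification step is not really a ``Bolthausen characterization.'' What you need is the following. Take Brownian motion at height \(y\) at time \(\varepsilon\), conditioned to survive until time \(1\). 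Its finite-dimensional laws must converge to those of the meander as \((\varepsilon,y)\to(0,0^+)\), jointly in both parameters. This holds in total variation, by the explicit killed densities and Scheff\'e.
\item Two further facts finish the identification. Step (i) shows that the time-\(\varepsilon\) marginal of a subsequential limit \(\mathfrak m'\) puts no mass at \(0\), so the mixture is over \((0,\infty)\). Also \(\mathfrak m'(\varepsilon)\to0\), which is automatic from right-continuity at \(0\). Together with the joint convergence above, these determine all finite-dimensional laws on \((0,1]\).
\end{itemize}
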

\begin{proof}
We briefly recall Bolthausen's construction
\cite{BolthausenMeander}, which allows us to pass from the classical Donsker theorem to the conditional CLT.  For \(f\in D([0,\infty))\), set
\[
 \begin{aligned}
 \vartheta(f)
 &:=\inf\bigl\{u\ge0:
 f(u+v)\ge f(u)\ \text{for every }0\le v\le1\bigr\},\\
 \Gamma(f)(v)
 &:=f(\vartheta(f)+v)-f(\vartheta(f)),
 \qquad 0\le v\le1 .
 \end{aligned}
\]
By the strong Markov property,
the successive descending ladder blocks are independent and identically distributed;
consequently, the first \(m\) steps of the first block having length greater
than \(m\) have exactly the law of the original walk conditioned not to
enter the negative half-line during its first \(m\) steps.  Thus, writing
\[
 \mathcal S_{N,m}(t):=
 \frac{S_N(\lfloor mt\rfloor)}{\sigma_N\sqrt m},
 \qquad t\ge0,
\]
one has the finite-\(m\) identity
\[
 \bigl(\Gamma(\mathcal S_{N,m})\bigr)
  \stackrel{d}{=} 
 \left(
   (\mathcal S_{N,m}(s))_{0\le s\le1}
   \,\middle|\,\tau_{N,0}>m
 \right).
\]
A refinement due to Doney~\cite{DoneyConditional} shows that
\[
  \Gamma:D([0,\infty))\longrightarrow D([0,1])
\]
is measurable and is continuous at Wiener-almost every path; in
particular, the continuity holds along step-function approximants.
Moreover, \(\Gamma(W)\overset{\mathrm d}=\mathfrak m\). Now take any diagonal \(m_N\to\infty\).  The
triangular-array Donsker theorem gives
\(\mathcal S_{N,m_N}\Rightarrow W\) in \(D([0,\infty))\), and hence the
continuous-mapping theorem and the preceding exact identity give the weak convergence
\[
 \left(
   (\mathcal S_{N,m_N}(s))_{0\le s\le1}
   \,\middle|\,\tau_{N,0}>m_N
 \right)
 \Longrightarrow \mathfrak m.
\]
This proves \eqref{eq:triangular-meander} along every diagonal
\(m_N\to\infty\). 

The endpoint tightness is a direct consequence of (a).
\end{proof}

\begin{theorem}[Triangular-array killed local limits]
\label{thm:killed-local-limits}
Let \(r_N\to\infty\), let \(t>0\), and suppose
\[
 L_N=\lfloor tr_N\rfloor,\qquad
 \frac{A_N}{\sigma_N\sqrt{r_N}}\longrightarrow h_0,\qquad
 \frac{B_N}{\sigma_N\sqrt{r_N}}\longrightarrow h_1 .
\]
Whenever \(h_0=0\), we assume \(A_N=0\), and whenever \(h_1=0\),
we assume \(B_N=0\).  
Whenever the endpoints are positive in the limit,
\begin{equation}
 \sqrt{r_N}\,q_{N,L_N}(A_N,B_N)
 \longrightarrow
 \frac1\sigma\mathbf F(t;h_1,h_0),
 \qquad h_0,h_1>0.                                  \label{eq:pp-limit}
\end{equation}
At a zero endpoint, let the constants \(a,b\) be the limits defined
in \eqref{eq:survival-constant-convergence}. Then
\begin{align}
 r_N q_{N,L_N}(0,B_N)
 &\longrightarrow a\,\mathbf F_0(t;h_1),
 && h_1>0,                                            \label{eq:0p-limit}\\
 r_N q_{N,L_N}(A_N,0)
 &\longrightarrow b\,\mathbf F_0(t;h_0),
 && h_0>0,                                            \label{eq:p0-limit}\\
 r_N^{3/2} q_{N,L_N}(0,0)
 &\longrightarrow \sigma ab\,\mathbf F_{0,0}(t).
                                                           \label{eq:00-limit}
\end{align}
The convergences are uniform when \(t\) ranges over a compact subset of
\((0,\infty)\) and the positive limiting endpoints range over compact
subsets of \((0,\infty)\).
\end{theorem}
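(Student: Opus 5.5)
We prove the four limits in the order \eqref{eq:pp-limit}, \eqref{eq:0p-limit}, \eqref{eq:p0-limit}, \eqref{eq:00-limit}. Each boundary case is reduced to the previous one by splitting the path at a short intermediate time and using the meander convergence of Lemma~\ref{lem:meanderconvergence}. Uniformity in \(t\) and in the limiting heights will follow from a subsequence argument: if uniformity fails, we extract \(t_N\to t\) and \(h_{0,N}\to h_0\), \(h_{1,N}\to h_1\), and the pointwise proof below applies verbatim to such sequences, since the kernels \(\mathbf F\), \(\mathbf F_0\), \(\mathbf F_{0,0}\) are continuous.

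\emph{Positive endpoints.} We write
\[
 q_{N,L}(A,B)=\Pp(A+S_N(L)=B)-\Pp\bigl(A+S_N(L)=B,\ \tau_{N,A}\le L\bigr).
\]
By \eqref{eq:uniform-local-clt}, \(\sqrt{r_N}\) times the first term tends to \((\sigma\sqrt{2\pi t})^{-1}e^{-(h_1-h_0)^2/(2t)}\). For the second term we use a local Donsker argument. Choose a small \(\varepsilon>0\) and split at time \(L-\lfloor\varepsilon r_N\rfloor\). By the Markov property the second term becomes
\[
 \sum_y \Pp\bigl(A+S_N(L-\lfloor\varepsilon r_N\rfloor)=y,\ \tau_{N,A}\le L-\lfloor\varepsilon r_N\rfloor\bigr)\,\Pp\bigl(S_N(\lfloor\varepsilon r_N\rfloor)=B-y\bigr),
\]
up to the event that the walk is killed during the last \(\varepsilon r_N\) steps. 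That event is \(o(1/\sqrt{r_N})\) uniformly as \(\varepsilon\to0\): reverse time from the endpoint \(B\) and apply \eqref{eq:local-max} with \(R\asymp h_1\sqrt{r_N}\), since the walk must descend from \(B\) to below \(0\) within \(\varepsilon r_N\) steps. In the main sum, the last factor is replaced via \eqref{eq:uniform-local-clt} by a Gaussian density at scale \(\sqrt{\varepsilon r_N}\), which is a bounded continuous function of \(y/\sqrt{r_N}\). The remaining sum then converges, by Donsker's invariance principle (the event \(\{\tau\le\cdot\}\) is a continuity set for Brownian motion), to the Brownian expectation. Letting \(\varepsilon\to0\) and using the reflection principle gives the image term \(e^{-(h_1+h_0)^2/(2t)}\), which proves \eqref{eq:pp-limit}.

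\emph{One zero endpoint.} For \eqref{eq:0p-limit}, split at \(m_N=\lfloor\varepsilon r_N\rfloor\):
\[
 q_{N,L}(0,B)=\sum_{x\ge0}q_{N,m_N}(0,x)\,q_{N,L-m_N}(x,B)
 =u^{\rightarrow}_{N,m_N}\,\Ee^{+}_{N,m_N}\bigl[q_{N,L-m_N}(S_N(m_N),B)\bigr].
\]
By \eqref{eq:uniform-survival-asymptotic} and Lemma~\ref{lem:survivalprob}, \(u^{\rightarrow}_{N,m_N}\sim \kappa^{\rightarrow}_N(\varepsilon r_N)^{-1/2}\), and \(\kappa^{\rightarrow}_N/\sigma_N\to a\sqrt{2/\pi}\). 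Under \(\Pp^+_{N,m_N}\), the scaled endpoint \(S_N(m_N)/(\sigma_N\sqrt{r_N})\) converges to \(\sqrt\varepsilon\,\mathfrak m(1)\) by Lemma~\ref{lem:meanderconvergence}. On the event \(\{S_N(m_N)\ge\delta\sqrt{r_N}\}\), the positive-endpoint result \eqref{eq:pp-limit}, which holds uniformly on compact sets of starting heights, gives \(\sqrt{r_N}\,q\to\sigma^{-1}\mathbf F(t-\varepsilon;h_1,\cdot)\). The contributions from small and large \(S_N(m_N)\) are controlled by \eqref{eq:two-boundary-kernel} together with tightness. Combining these gives \(r_N q_{N,L}(0,B)\) equal, up to errors that vanish as \(\varepsilon\to0\), to
\[
 a\sqrt{2/\pi}\,\varepsilon^{-1/2}\,\Ee\bigl[\mathbf F(t-\varepsilon;h_1,\sqrt\varepsilon\,\mathfrak m(1))\bigr].
\]
Since \(\mathbf F(t;h_1,x)\sim 2x h_1 t^{-3/2}(2\pi)^{-1/2}e^{-h_1^2/(2t)}\) as \(x\to0\) and \(\Ee\,\mathfrak m(1)=\sqrt{\pi/2}\), this quantity tends to \(a\,\mathbf F_0(t;h_1)\) as \(\varepsilon\to0\). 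The dual statement \eqref{eq:p0-limit} follows by time reversal, using \(q_{N,L}(A,0)=\check q_{N,L}(0,A)\) and the constant \(b\).

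\emph{Both endpoints zero.} For \eqref{eq:00-limit}, split at \(m_N=\lfloor \varepsilon r_N\rfloor\) exactly as above, and apply \eqref{eq:p0-limit} uniformly to the remaining kernel \(q_{N,L-m_N}(x,0)\). This yields \(a\sqrt{2/\pi}\,\varepsilon^{-1/2}\,\sigma\,\Ee[\,b\,\mathbf F_0(t-\varepsilon;\sqrt\varepsilon\,\mathfrak m(1))]\), where the factor \(\sigma\) comes from \(x=\sigma\sqrt{r_N}\cdot(\cdot)\). Letting \(\varepsilon\to0\) gives \(\sigma ab\,\mathbf F_{0,0}(t)\).

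\emph{Main obstacle.} The key difficulty is the interchange of the limits \(N\to\infty\) and \(\varepsilon\to0\) in the boundary cases. This requires showing that the contributions from \(S_N(m_N)\le\delta\sqrt{r_N}\) and from \(S_N(m_N)\ge M\sqrt{r_N}\) are small uniformly in \(N\), relative to the main term of order \(\varepsilon^{1/2}\). The small-height region is handled by \eqref{eq:two-boundary-kernel}, which gives a factor \((x+1)/\sqrt{r_N}\), combined with \eqref{eq:forward-endpoint}, which gives \(\Pp^+(S_N(m_N)\le\delta\sqrt{r_N})=O(\delta^2/\varepsilon)\). The large-height region is handled by \eqref{eq:meander-moderate}. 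We also need \(\Ee^+[S_N(m_N)]/\sqrt{m_N}\to\sigma\sqrt{\pi/2}\), which is uniform integrability of the meander endpoint and follows from \eqref{eq:meander-moderate}.
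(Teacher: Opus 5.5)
Your proof is correct. It follows the paper's route for \eqref{eq:0p-limit} and \eqref{eq:p0-limit}, but takes a different route for \eqref{eq:pp-limit} and \eqref{eq:00-limit}, and finishes the boundary cases differently.

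For \eqref{eq:pp-limit}, the paper multiplies the local endpoint mass by the probability that the lattice bridge stays nonnegative. It takes the convergence of unconditioned lattice bridges as a consequence of Donsker and \eqref{eq:uniform-local-clt}, with limiting positivity probability $1-e^{-2h_0h_1/t}$. You instead:
\begin{enumerate}[label=\textup{(\roman*)}]
\item subtract the killed mass;
\item smooth the endpoint constraint with the local CLT on a final block of length $\lfloor\varepsilon r_N\rfloor$, so that Donsker applies to a bounded functional whose discontinuity set is Brownian-null;
\item control killing inside that block by \eqref{eq:local-max} for the time-reversed dual walk, which contributes at most $C\varepsilon^{-1/2}e^{-ch_1^2/\varepsilon}$ after scaling;
\item recover the image term by reflection.
\end{enumerate}
This is longer, but it actually proves the bridge-convergence step that the paper leaves implicit.

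For \eqref{eq:0p-limit}, your decomposition is the paper's. The difference is that the paper notes the fixed-$\varepsilon$ limit is already exact: by Chapman--Kolmogorov, $\varepsilon^{-1/2}\int_0^\infty ze^{-z^2/2}\mathbf F(t-\varepsilon;h,\sqrt\varepsilon z)\,dz=ht^{-3/2}e^{-h^2/(2t)}$. Your extra limit $\varepsilon\to0$, via the linearization of $\mathbf F$ and $\Ee\,\mathfrak m(1)=\sqrt{\pi/2}$, is valid (dominate by $\varepsilon^{-1/2}\mathbf F(t-\varepsilon;h_1,\sqrt\varepsilon z)\le Cz$) but avoidable. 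The analogous identity with $\mathbf F_0$ in place of $\mathbf F$ likewise makes your $\varepsilon\to0$ step in the zero--zero case unnecessary.

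For \eqref{eq:00-limit}, the paper splits at a macroscopic time $sr_N$. It feeds \eqref{eq:0p-limit} and \eqref{eq:p0-limit} into a Riemann sum over the midpoint height, where the factor $\sigma$ is the lattice mesh, and uses $\int_0^\infty\mathbf F_0(s;h)\mathbf F_0(t-s;h)\,dh=\mathbf F_{0,0}(t)$. You reuse the short meander split together with \eqref{eq:p0-limit}. In your version the $\sigma$ enters through $\kappa^{\rightarrow}=\sigma a\sqrt{2/\pi}$, not through the spatial scaling, because \eqref{eq:p0-limit} carries no $1/\sigma$.

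Finally, you do not need the uniform integrability of the discrete meander endpoint. For fixed $\varepsilon$ the integrands are bounded and continuous on the cutoff window, so weak convergence together with \eqref{eq:two-boundary-kernel} and tightness suffices.
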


\begin{proof}
We first prove \eqref{eq:pp-limit}.  The ordinary lattice local CLT is
uniform along the array by \eqref{eq:uniform-local-clt}.  The
triangular-array Donsker theorem and
the local CLT imply convergence of the unconditioned lattice bridges,
uniformly when their two scaled endpoints range in compact subsets of
\((0,\infty)\).  The probability that such a bridge stays nonnegative
then converges to the corresponding Brownian bridge probability.  The
boundary of the positivity event has Brownian probability zero, and
\eqref{eq:two-boundary-kernel} supplies the required uniform domination.
Multiplying the bridge probability by the local endpoint mass proves
\eqref{eq:pp-limit}.  

For the proof of \eqref{eq:0p-limit}, fix \(\varepsilon\in(0,t)\), put
\(m_N=\lfloor\varepsilon r_N\rfloor\), and write
\(\Ee_{N,m_N}^{+}\) for expectation under the length-\(m_N\) conditioned
meander.  The Markov
property gives the exact identity
\begin{equation*}
 q_{N,L_N}(0,B_N)
 =u_{N,m_N}^{\rightarrow}
 \Ee_{N,m_N}^{+}\!\left[
 q_{N,L_N-m_N}(S_N(m_N),B_N)
 \right].
\end{equation*}
By \eqref{eq:uniform-survival-asymptotic},
\eqref{eq:triangular-meander}, and \eqref{eq:pp-limit}, the right-hand
side, after multiplication by \(r_N\), converges to
\begin{equation}
 \frac{\kappa^{\rightarrow}}{\sigma\sqrt\varepsilon}
 \int_0^\infty z e^{-z^2/2}
 \mathbf F(t-\varepsilon;h_1,\sqrt\varepsilon z)\,dz,
 \label{eq:one-boundary-Brownian-convolution}
\end{equation}
where \(\kappa^{\rightarrow}=\lim_N\kappa_N^{\rightarrow}\).
To justify taking the limit inside the expectation, first restrict the
variable
\[
 z=\frac{S_N(m_N)}{\sigma_N\sqrt{m_N}}
\]
to \([\delta,M]\), where
\eqref{eq:pp-limit} is uniform.  Then let \(M\to\infty\), using the
endpoint tail bound in Lemma \ref{lem:meanderconvergence} (b), and let \(\delta\downarrow0\) using
the factor \((x+1)/\sqrt{r_N}\) in
\eqref{eq:two-boundary-kernel}.  

For every \(0<\varepsilon<t\), the Brownian kernel convolution gives
\begin{equation*}
 \frac1{\sqrt\varepsilon}
 \int_0^\infty z e^{-z^2/2}
 \mathbf F(t-\varepsilon;h,\sqrt\varepsilon z)\,dz
 =\frac{h}{t^{3/2}}e^{-h^2/(2t)}.
\end{equation*}
Since
\(a=\sqrt{\pi/2}\,\kappa^{\rightarrow}/\sigma\), the expression in
\eqref{eq:one-boundary-Brownian-convolution} equals
\(a\mathbf F_0(t;h_1)\).  This proves \eqref{eq:0p-limit}.  Applying the
same argument after time reversal to the dual array proves
\eqref{eq:p0-limit}.

It remains to identify the \(0\)-to-\(0\) constant.  Fix
\(s\in(0,t)\) and split at \(\lfloor sr_N\rfloor\):
\[
 q_{N,L_N}(0,0)
 =\sum_{H\ge0}
 q_{N,\lfloor sr_N\rfloor}(0,H)
 q_{N,L_N-\lfloor sr_N\rfloor}(H,0).
\]
Similar as above, one can restrict $H/\sigma_N\sqrt{r_N}$ to $[\delta,M]$. For terms with
\(H=\lfloor\sigma_N\sqrt{r_N}h\rfloor\), use
\eqref{eq:0p-limit} and \eqref{eq:p0-limit}; the lattice Riemann-sum mesh
contributes the factor \(\sigma\).  Therefore
\[
 \lim_{N\to\infty}r_N^{3/2}q_{N,L_N}(0,0)
 =\sigma ab\int_0^\infty
 \mathbf F_0(s;h)\mathbf F_0(t-s;h)\,dh
 =\sigma ab\,\mathbf F_{0,0}(t),
\]
which is \eqref{eq:00-limit}.

\end{proof}

\section{Invariance principle}

Let \(\mathbf Q_{N;A,B,L}^{+}\) denote the law of
\((A+S_N(j))_{0\le j\le L}\) conditional on
\(\tau_{N,A}>L\) and \(A+S_N(L)=B\).

\begin{theorem}[Triangular-array conditioned bridge invariance principle]
\label{thm:bridge-fclt}
Under the assumptions of Theorem~\ref{thm:killed-local-limits}, define
the right-continuous step process
\[
 \widehat S_N(u)
 :=
 \frac{A_N+S_N(\lfloor r_Nu\rfloor)}
      {\sigma_N\sqrt{r_N}},
 \qquad 0\le u\le t .
\]
Under \(\mathbf Q_{N;A_N,B_N,L_N}^{+}\), the process
\(\widehat S_N\) converges in \(D([0,t])\), equipped with the \(J_1\)
topology, to the Brownian bridge of length \(t\) from \(h_0\) to \(h_1\),
conditioned to stay nonnegative. When exactly one endpoint is zero, this is
the corresponding
\(\mathrm{BES}(3)\) bridge, interpreted by time reversal if the terminal
endpoint is zero; when both endpoints are zero it is the normalized
Brownian excursion of length \(t\).
\end{theorem}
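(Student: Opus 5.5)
The proof follows the standard route: convergence of finite-dimensional distributions, then tightness. Both come from the killed local limits of Theorem~\ref{thm:killed-local-limits} and the uniform bounds of Proposition~\ref{prop:killed-bounds}.

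\emph{Finite-dimensional distributions.} Fix times \(0<u_1<\dots<u_k<t\). By the Markov property, the joint law of \((\widehat S_N(u_1),\dots,\widehat S_N(u_k))\) under \(\mathbf Q^{+}_{N;A_N,B_N,L_N}\) puts mass
\[
 \frac{q_{N,\ell_1}(A_N,x_1)\,q_{N,\ell_2-\ell_1}(x_1,x_2)\cdots q_{N,L_N-\ell_k}(x_k,B_N)}{q_{N,L_N}(A_N,B_N)}
\]
on lattice points \(x_i\), where \(\ell_i=\lfloor r_Nu_i\rfloor\). I would test this against bounded continuous functions of the rescaled heights \(x_i/(\sigma_N\sqrt{r_N})\), and first restrict each rescaled height to a compact set \([\delta,M]\subset(0,\infty)\).

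On that set, the interior kernels converge by \eqref{eq:pp-limit}. The outer kernels converge by \eqref{eq:0p-limit} or \eqref{eq:p0-limit} when an endpoint is zero, and by \eqref{eq:pp-limit} otherwise. The denominator converges by the matching case among \eqref{eq:pp-limit}--\eqref{eq:00-limit}. The lattice sums are Riemann sums with mesh \((\sigma_N\sqrt{r_N})^{-1}\), and each summed interior coordinate contributes a factor \(\sigma\). The result is the ratio of products of killed Brownian kernels \(\mathbf F,\mathbf F_0\) to \(\mathbf F,\mathbf F_0,\mathbf F_{0,0}\). The constants \(a,b,\sigma\) cancel between numerator and denominator. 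This ratio is exactly the finite-dimensional density of the conditioned Brownian bridge, equivalently the \(\mathrm{BES}(3)\) bridge or the excursion in the boundary cases.

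To remove the truncation, I would use \eqref{eq:two-boundary-kernel} for the small heights \((<\delta)\): the factors \(\min\{1,(x+1)/\sqrt{L}\}\) give mass \(O(\delta)\) after summation, uniformly in \(N\). For the large heights \((>M)\) I would use the exponential factors in \eqref{eq:forward-endpoint}, or simply the Gaussian tail of \eqref{eq:local-bernstein}, combined with \eqref{eq:two-boundary-kernel}. This gives a uniform integrable domination relative to the denominator, whose order is known from Theorem~\ref{thm:killed-local-limits}.

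\emph{Tightness in \(D([0,t])\).} Since the limit is continuous, it suffices to control the modulus of continuity: for every \(\varepsilon>0\), the probability that \(\widehat S_N\) has an oscillation \(>\varepsilon\) on some interval of length \(\theta\) should tend to \(0\) as \(\theta\to0\), uniformly in large \(N\). I would split \([0,t]\) into the middle \([\theta',t-\theta']\) and the two boundary pieces.

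On the middle piece, the density of the conditioned law with respect to the unconditioned walk started from the point at time \(\lfloor\theta' r_N\rfloor\) is bounded. Indeed, it equals \(q_{N,\cdot}(x,B_N)/q_{N,L_N}(A_N,B_N)\) times the killing indicator, and \eqref{eq:two-boundary-kernel} together with the lower bound from Theorem~\ref{thm:killed-local-limits} bounds it by \(C(\theta')\). Tightness there therefore reduces to tightness of the free walk, which follows from the maximal inequality in Lemma~\ref{lem:unconditioned}.

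Near a boundary time I need the path to stay small. Let \(\ell=\lfloor\theta r_N\rfloor\) and \(R=\varepsilon\sigma_N\sqrt{r_N}\). I would bound the probability of exceeding \(h_0+\varepsilon\) in the first \(\ell\) steps by decomposing at time \(\ell\):
\[
 \sum_H q_{N,\ell}\bigl(A_N,H;\max\ge R\bigr)\,q_{N,L_N-\ell}(H,B_N).
\]
When \(A_N=0\), this is controlled by \eqref{eq:forward-relative-max} and \eqref{eq:summed-reverse} with \(R^2/\ell\asymp\varepsilon^2/\theta\), and \eqref{eq:excursion-max} covers the case \(B_N=0\). The result is \(\exp(-c\varepsilon^2/\theta)\), divided by the denominator order. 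When \(h_0>0\), I would instead use the local-maximum bound \eqref{eq:local-max}, since the kernel near a positive starting point behaves like the free walk. The terminal end is handled by the dual array via time reversal. Maximal oscillations within the middle are then handled by a union bound over \(O(1/\theta)\) blocks, using \eqref{eq:local-max}.

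\emph{Main obstacle.} I expect the hardest step to be the zero-endpoint boundary layer in the tightness argument. There the conditioning is singular, the absolute-continuity comparison breaks down, and the estimate must be obtained from the killed maximal bounds \eqref{eq:summed-reverse} and \eqref{eq:excursion-max}. The point to check carefully is that the prefactor powers of \(\ell\) and \(L_N\) in these bounds cancel against the \(r_N^{-1}\) or \(r_N^{-3/2}\) order of the denominator without leaving a factor such as \(\theta^{-1/2}\). If such a factor is lost, it is absorbed by \(e^{-c\varepsilon^2/\theta}\) as \(\theta\to0\), so the argument should close either way.
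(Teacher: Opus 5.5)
Your proposal is correct, but it is organized differently from the paper's proof. The paper never estimates oscillations near a zero endpoint directly. It splits at the midpoint \(s=t/2\) and writes the first half as the conditioned meander \(\Pp^{+}_{N,m_N}\), reweighted by the bounded density \(D_N^{0,h_1}(x)=u^{\rightarrow}_{N,m_N}q_{N,n_N}(x,B_N)/q_{N,L_N}(0,B_N)\le C\min\{1,(x+1)/\sqrt{r_N}\}\). The second half is a positive-to-positive conditioned bridge. So the behaviour at the singular endpoint is inherited from the functional meander limit of Lemma~\ref{lem:meanderconvergence}. The positive case comes from conditioning the free lattice bridge on a continuity event of positive probability, and the \(0\)-to-\(0\) case reduces to the one-zero case for the dual array by reversing the second block.

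You instead read off finite-dimensional limits from products of killed kernels via Theorem~\ref{thm:killed-local-limits}, and you get tightness from the explicit maximal bounds. Near a zero initial endpoint, split at \(\ell=\lfloor\theta r_N\rfloor\), bound the second factor by \eqref{eq:two-boundary-kernel}, and sum the first with \eqref{eq:summed-reverse}. After dividing by the order of \(q_{N,L_N}(0,B_N)\) or \(q_{N,L_N}(0,0)\), this gives \(C\theta^{-1/2}e^{-c\varepsilon^2/\theta}\). So the prefactor loss you anticipate does occur, but it is harmless, and \eqref{eq:excursion-max} is not needed. Your route treats all four endpoint configurations uniformly and gives a quantitative boundary estimate; the meander enters only through Theorem~\ref{thm:killed-local-limits}. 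The paper's route is shorter, since beyond the density bound and the compact cutoff at the midpoint it needs no oscillation estimates.

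One step should be stated more precisely: your formula for the middle-block density is not quite right as written. Relative to the free walk started at a fixed point \(x\) at time \(\ell'=\lfloor\theta' r_N\rfloor\), the density of the middle block is \(\mathbf 1\{\text{nonnegative on }[\ell',L_N-\ell']\}\,q_{N,\ell'}(y,B_N)/q_{N,L_N-\ell'}(x,B_N)\), where \(y\) is the position at time \(L_N-\ell'\). This is bounded only when \(x/(\sigma_N\sqrt{r_N})\) lies in a compact subset of \((0,\infty)\), which holds with high probability by your truncation estimates. Alternatively, take as reference measure the walk conditioned on \(\tau_{N,A_N}>\ell'\) up to time \(\ell'\), followed by free increments. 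The density is then \(\Pp(\tau_{N,A_N}>\ell')\,q_{N,\ell'}(y,B_N)/q_{N,L_N}(A_N,B_N)\) times the same indicator. This is at most \(C(\theta')\) by \eqref{eq:boundary-survival}, \eqref{eq:two-boundary-kernel} and the lower bounds from Theorem~\ref{thm:killed-local-limits}, with no window needed.
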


\begin{proof}
Write \(\mathbf Q_{x,y}^{+,u}\) for the Brownian bridge of length
\(u\) from \(x\) to \(y\), conditioned to remain nonnegative, and let
\(\mathsf M^u\) denote the Brownian meander of length \(u\).
All discrete blocks below are rescaled in time by \(r_N\) and in space
by \(\sigma_N\sqrt{r_N}\), where integer-part discrepancies are negligible.

\smallskip
\noindent
\emph{Two positive endpoints.}
Suppose \(h_0,h_1>0\).  The triangular-array Donsker theorem and the
uniform lattice local CLT \eqref{eq:uniform-local-clt} give
convergence of the unconditioned lattice
bridge from \(A_N\) to \(B_N\) to the Brownian bridge from \(h_0\) to
\(h_1\).  
For positive endpoints, the event that the bridge remains nonnegative
is a continuity event for the limiting Brownian bridge, and its
probability is
\[
 1-\exp\left\{-\frac{2h_0h_1}{t}\right\}>0.
\]
Conditioning therefore gives
\[
 \mathbf Q_{N;A_N,B_N,L_N}^{+}
 \Longrightarrow
 \mathbf Q_{h_0,h_1}^{+,t}.
\]

\smallskip
\noindent
\emph{One zero endpoint.}
We identify a path with the pair of its restrictions to the two
half-intervals.  By the standard argument on the product space, it suffices to show convergence of
the expectations of product test functions, together with the joint tightness of the pair.  

Assume \(A_N=0\) and \(h_1>0\).  Set
\[
 s:=\frac t2,\qquad
 m_N:=\lfloor sr_N\rfloor,\qquad
 n_N:=L_N-m_N,
\]
and let \(\mathsf M_{N,m_N}:=\Pp_{N,m_N}^{+}\) be the length-\(m_N\)
meander law.  For \(x\ge0\), put
\[
 D_N^{0,h_1}(x)
 :=
 \frac{
   u_{N,m_N}^{\rightarrow}q_{N,n_N}(x,B_N)
 }{
   q_{N,L_N}(0,B_N)
 }.
\]
If \(F\) and \(G\) are bounded continuous functionals of the rescaled
first and second blocks, respectively, the Markov property gives
\[
\begin{aligned}
 &\Ee_{\mathbf Q_{N;0,B_N,L_N}^{+}}[FG] \\
 &\qquad =
 \Ee_{\mathsf M_{N,m_N}}
 \left[
   F\,D_N^{0,h_1}\bigl(S_N(m_N)\bigr)
   \Ee_{\mathbf Q_{N;S_N(m_N),B_N,n_N}^{+}}[G]
 \right].
\end{aligned}
\]
Thus the first block is an endpoint-dependent change of measure of the
meander.

The boundary-survival bounds, \eqref{eq:two-boundary-kernel}, and the
lower limit in \eqref{eq:0p-limit} give
\[
 D_N^{0,h_1}(x)
 \le
 C\min\left\{1,\frac{x+1}{\sqrt{r_N}}\right\}.
\]
Consequently, midpoint heights below
\(\delta\sigma_N\sqrt{r_N}\) have uniformly negligible weighted
meander mass as \(\delta\downarrow0\).  Heights above
\(M\sigma_N\sqrt{r_N}\) are negligible as \(M\to\infty\), because the
density is uniformly bounded and the meander endpoints are uniformly
tight. We may therefore restrict the midpoint to a compact positive
window.  On such a window, convergence along every convergent endpoint
sequence, together with continuity of the limiting kernels, makes the
convergence below uniform by compactness.

If
\[
 \frac{x_N}{\sigma_N\sqrt{r_N}}\longrightarrow z>0,
\]
then the survival and killed local limits give
\[
 D_N^{0,h_1}(x_N)
 \longrightarrow
 \sqrt{\frac{2}{\pi s}}\,
 \frac{\mathbf F(t-s;h_1,z)}
      {\mathbf F_0(t;h_1)},
\]
while the positive-to-positive case gives
\[
 \mathbf Q_{N;x_N,B_N,n_N}^{+}
 \Longrightarrow
 \mathbf Q_{z,h_1}^{+,t-s}.
\]
Integrating over the midpoint height therefore gives convergence of
\(\Ee_{\mathbf Q_{N;0,B_N,L_N}^{+}}[FG]\); we now identify its limit.
The endpoint of \(\mathsf M^s\) has density
\[
 \rho_s(z)=\frac{z}{s}e^{-z^2/(2s)}\mathbf1_{\{z>0\}},
\]
and
\[
 \rho_s(z)
 \sqrt{\frac{2}{\pi s}}\,
 \frac{\mathbf F(t-s;h_1,z)}
      {\mathbf F_0(t;h_1)}
 =
 \frac{
   \mathbf F_0(s;z)\mathbf F(t-s;h_1,z)
 }{
   \mathbf F_0(t;h_1)
 }.
\]
This is precisely the midpoint decomposition of
\(\mathbf Q_{0,h_1}^{+,t}\).  The same compact cutoff gives joint
tightness of the two blocks, and concatenation is continuous at their
continuous limiting paths.  Hence
\[
 \mathbf Q_{N;0,B_N,L_N}^{+}
 \Longrightarrow
 \mathbf Q_{0,h_1}^{+,t}.
\]
Applying the same result to the dual array and reversing time gives the
case \(h_0>0\), \(B_N=0\).

\smallskip
\noindent
\emph{Two zero endpoints.}
Assume \(A_N=B_N=0\), and retain \(s,m_N,n_N\) from above.  Define
\[
 D_N^{0,0}(x)
 :=
 \frac{
   u_{N,m_N}^{\rightarrow}q_{N,n_N}(x,0)
 }{
   q_{N,L_N}(0,0)
 }.
\]
After reversing the second block, its conditional law given the
midpoint \(x\) is the dual bridge
\(\widetilde{\mathbf Q}_{N;0,x,n_N}^{+}\).  Thus
\[
\begin{aligned}
 &\Ee_{\mathbf Q_{N;0,0,L_N}^{+}}[F G^{\leftarrow}]\\
 &\qquad =
 \Ee_{\mathsf M_{N,m_N}}
 \left[
   F\,D_N^{0,0}\bigl(S_N(m_N)\bigr)
   \Ee_{\widetilde{\mathbf Q}_{N;0,S_N(m_N),n_N}^{+}}[G]
 \right],
\end{aligned}
\]
where \(G^{\leftarrow}\) means that \(G\) is applied to the reversed
second block.

The same uniform estimate holds:
\[
 D_N^{0,0}(x)
 \le
 C\min\left\{1,\frac{x+1}{\sqrt{r_N}}\right\},
\]
Again, heights below \(\delta\sigma_N\sqrt{r_N}\) and above
\(M\sigma_N\sqrt{r_N}\) are negligible as \(\delta, M\to\infty\).
For \(x_N/(\sigma_N\sqrt{r_N})\to z>0\),
\[
 D_N^{0,0}(x_N)
 \longrightarrow
 \sqrt{\frac{2}{\pi s}}\,
 \frac{\mathbf F_0(t-s;z)}
      {\mathbf F_{0,0}(t)},
\]
and the one-zero-endpoint result for the dual array gives
\[
 \widetilde{\mathbf Q}_{N;0,x_N,n_N}^{+}
 \Longrightarrow
 \mathbf Q_{0,z}^{+,t-s}.
\]

Hence the same midpoint cutoff applies.  Finally,
\[
 \rho_s(z)
 \sqrt{\frac{2}{\pi s}}\,
 \frac{\mathbf F_0(t-s;z)}
      {\mathbf F_{0,0}(t)}
 =
 \frac{
   \mathbf F_0(s;z)\mathbf F_0(t-s;z)
 }{
   \mathbf F_{0,0}(t)
 },
\]
which is the midpoint decomposition of the normalized Brownian
excursion.  Reversing the second block back and concatenating proves
the \(0\)-to-\(0\) case.

\end{proof}

\medskip

\textsc{j. xu, the ohio state university, columbus, oh, usa}

e-mail: \texttt{jxu0800@gmail.com}

\end{document}